\documentclass[a4paper,11pt]{article}

\usepackage[a4paper,margin=1in]{geometry}
\usepackage{amsmath, amssymb, amsthm, amsfonts}
\usepackage[T1]{fontenc}
\usepackage{lmodern}
\usepackage{graphicx}
\usepackage{enumitem}
\usepackage{authblk}
\usepackage{float}

\usepackage{hyperref}
\hypersetup{
	colorlinks=true,
	linkcolor=blue,
	citecolor=blue,
	urlcolor=blue
}
\usepackage{setspace}
\usepackage{tikz}
\usepackage[mathscr]{euscript}
\newtheorem{theorem}{Theorem}[section]
\newtheorem{lemma}[theorem]{Lemma}
\newtheorem{corollary}[theorem]{Corollary}
\newtheorem{proposition}[theorem]{Proposition}
\numberwithin{equation}{section}
\theoremstyle{definition}

\newtheorem{observation}[theorem]{Observation}

\newcommand{\ig}{\mathcal{I}(G)}
\newcommand{\V}{\mathscr{M}}

\newcommand{\vu}{\mathscr{M}_{u}}
\newcommand{\g}{\mathcal{G}}

\newcommand{\ii}{\mathcal{I}}

\newcommand{\igu}{\mathcal{I}(G - N_{G}[u])}
\newcommand{\igv}{\mathcal{I}(G - N_{G}[v])}
\newcommand{\igij}{\mathcal{I}(G_{i,j} - N_{G_{i,j}}[u])}

\title{A recursive construction of an acyclic matching on the independence complex of a graph with a simplicial vertex}

\author{Sucharita Barik\thanks{\texttt{sucharita.barik.126@tcgcrest.org}}}

\author{Anupam Mondal\thanks{\texttt{anupam.mondal@tcgcrest.org}}}

\author{Sajal Mukherjee\thanks{\texttt{sajal.mukherjee@tcgcrest.org}}}

\affil{\small Institute for Advancing Intelligence (IAI), TCG CREST, Kolkata--700091, India}
\affil{\small Academy of Scientific and Innovative Research (AcSIR), Ghaziabad--201002, India}
\date{}

\begin{document}
	
	\maketitle
	
	\date{}
	
	
	\begin{abstract}
		We provide a recursive construction of an acyclic matching (also known as a gradient vector field, an equivalent notion to a discrete Morse function) on the independence complex of a graph with a simplicial vertex using given acyclic matchings on the independence complexes of specific subgraphs. As an application, we determine the homotopy type of the independence complexes of the family of chordal graphs and of a class of graphs generalising the comparability graphs of grid posets in an algorithmic and combinatorial manner via discrete Morse theory, some of which were previously obtained by sophisticated homotopy theoretic techniques. Even when the homotopy type is not easily determinable, our construction may be applied to obtain a pre-processing framework for efficient homology computation.
	\end{abstract}
	
	\textbf{Keywords:} discrete Morse theory, independence complex, chordal graph, comparability graph, power graph, simplicial vertex.
	
	\textbf{MSC (2020):} 57Q70, 05E45, 05C69, 55P10.

	\section{Introduction}
	Discrete Morse theory, introduced by Forman \cite{formanmain, formanuser}, is a powerful combinatorial adaptation of classical Morse theory, providing an effective and algorithmically tractable framework for analysing the topology (such as homotopy, homology, etc.) of an (abstract) simplicial complex. Since its introduction, this theory has proven to be a powerful tool with applications across diverse areas of both theoretical and applied mathematics. 
	
	Although discrete Morse theory provides computationally convenient techniques in contrast to the previously known topological machinery, it usually requires an efficient discrete Morse function to begin with. Rather, in practice, instead of discrete Morse functions, it is more convenient to work with an equivalent notion of \emph{acyclic matchings}, also known as \emph{gradient vector fields} on a given complex. The strength of discrete Morse theory is that it enables us to determine the homotopy type of a simplicial complex using only the information of its \emph{critical simplices} corresponding to an acyclic matching. Furthermore, this theory allows us to compute the homology groups of a complex \ efficiently \cite{formanmain, gallai, vidit, viditpersistent, ask}. The computation of the homology groups of a complex reduces to the computation of the homology of a simpler chain complex (only critical simplices of corresponding dimensions generate chain groups).
	
	The \textit{independence complex} of a graph $G$, denoted by $\ig$, is a simplicial complex, generated by independent sets of $G$. Independence complexes (and their generalisations) of various families of graphs are important combinatorial objects, and they have been studied extensively in the literature \cite{barmak, deshpande, ehrenborg, engstromindependence, engstrom, kazuhirohomotopy, kazuhiro, anurag}. Several other important graph-theoretic complexes are associated with independence complexes. For example, the matching complex of a graph is the independence complex of its line graph, and the clique complex of a graph is the independence complex of its complementary graph, etc. 
	
	The techniques that are commonly used to study the topology of independence complexes in the literature involve sophisticated homotopy theoretical arguments; in particular, they rely heavily on important results due to Björner et al.\ \cite{bjorner, bjornerhomotopy}.
	
	In this paper, we construct an acyclic matching, recursively on the independence complex of a graph $G$ with a simplicial vertex, using a suitably chosen acyclic matching on the independence complex of some specific subgraphs of $G$ via the methods of discrete Morse theory. 
	
	First, we introduce a few useful notions and state the main theorem. A vertex is a \emph{universal vertex} if it is adjacent to all other vertices of $G$. For a vertex $v$ of $G$, $N_{G}(v)$ denotes the set of neighbours of $v$, and $N_{G}[v]=N_{G}(v) \sqcup \{v\}$. Let $f_{d}^{\V}$ denote the number of $d$-dimensional critical simplices in $\ig$, with respect to a given acyclic matching $\V$. 
	
	\begin{theorem}\label{mth}
		Let $v \in V(G)$ such that $\{v\} \subsetneq N_{G}[v] \subsetneq V(G)$ and $N_{G}[v]$ is a clique. Let $k$ be the number of universal vertices of the graph $G$. For all $u \in N_{G}(v)$, let $\vu$ be an acyclic matching on $\igu$. Then, there is an acyclic matching $\V$ on $\ig$ such that
		\begin{enumerate}[label=(\roman*)]
			\item ${f^{\V}_{0}}(\ig) = 1 + k$ = number of connected components of $\ig$,
			\item ${f^{\V}_{1}}(\mathcal{I}(G)) =\sum\limits_{u \in N_{G}(v)}
			{f^{\vu}_{0}}(\igu) - (|N_{G}(v)| - k),$
			\item for $t \geq 2$, ${f^{\V}_{t}}(\mathcal{I}(G))$ $ = \sum\limits_{u \in N_{G}(v)}{f^{\vu}_{t-1}}(\igu)$.
		\end{enumerate}
	\end{theorem}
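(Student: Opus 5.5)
We will build $\V$ by splitting the face poset of $\ig$ according to how a face meets the clique $N_{G}[v]$. We then match each piece separately and glue the pieces together with the Cluster Lemma (Patchwork Theorem) of discrete Morse theory.

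\textbf{Step 1: decompose the face poset.} Write $N_{G}(v)=\{u_1,\dots,u_m\}$. Since $N_{G}[v]$ is a clique, every independent set contains at most one vertex of $N_{G}[v]$. Every face $\sigma$ of $\ig$ is therefore of exactly one of two types.
\begin{itemize}[label=$\cdot$]
\item Type 0: $\sigma\cap N_{G}(v)=\emptyset$, so $\sigma$ either contains $v$ or avoids $N_{G}[v]$ entirely.
\item Type $u$: $\sigma$ contains a unique $u\in N_{G}(v)$. Then $\sigma=\{u\}\sqcup\tau$ with $\tau\in\igu\cup\{\emptyset\}$. Conversely every such set is independent, because $N_{G}[v]\subseteq N_{G}[u]$.
\end{itemize}
Define $\varphi$ from the face poset to the poset $P=\{0\}\cup N_{G}(v)$, in which $0$ lies below every $u$ and the $u$'s are pairwise incomparable, by sending a face to its type. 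This map is order-preserving: if $\sigma\subseteq\sigma'$ and $\sigma'$ has type $u$, then $\sigma$ has type $u$ or type $0$. By the Cluster Lemma, a union of acyclic matchings on the fibres $\varphi^{-1}(p)$ is acyclic on $\ig$.

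\textbf{Step 2: match each fibre.} On the fibre $\varphi^{-1}(0)$ we use the element matching $\sigma\leftrightarrow\sigma\cup\{v\}$ for faces $\sigma$ avoiding $N_{G}[v]$. This is a standard acyclic element matching, and its only unmatched face is $\{v\}$, because the empty face is not a simplex. On the fibre $\varphi^{-1}(u)$ we transport $\vu$ along $\tau\mapsto\{u\}\cup\tau$. This preserves incidences and raises dimension by one, so it is acyclic, and its unmatched faces are $\{u\}$ together with $\{u\}\cup\tau$ for each critical $\tau$ of $\vu$.
\begin{itemize}[label=$\cdot$]
\item If $u$ is universal, then $G-N_{G}[u]$ is empty and $\{u\}$ is the only face in the fibre.
\item If $u$ is not universal, then $\igu\neq\emptyset$, so $\vu$ has at least one critical vertex $w$. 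We add the pair $\{u\}\leftrightarrow\{u,w\}$, which corresponds to pairing $\emptyset$ with $\{w\}$ in the augmented complex.
\end{itemize}
The extra pair does not create a cycle. In the fibre, $\{u\}$ is the unique face corresponding to $\emptyset$. An alternating path can reach it only by stepping down from $\{u,w\}$, and it would then have to step down again from $\{u\}$ within the fibre, which is impossible.

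\textbf{Step 3: count critical faces.} The critical $0$-faces are $\{v\}$ together with the $k$ universal vertices, giving $1+k$. The critical $1$-faces are the edges $\{u,w\}$ with $w$ a critical vertex of $\vu$, summed over $u$, minus one used edge for each of the $|N_{G}(v)|-k$ non-universal $u$. For $t\ge 2$ the critical $t$-faces are the faces $\{u\}\cup\tau$ with $\tau$ a critical $(t-1)$-face of $\vu$. These are exactly (ii) and (iii).

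For the equality in (i) with the number of connected components, first note that each universal vertex is an isolated point of $\ig$. Next, $v$ is joined in $\ig$ to every vertex outside $N_{G}[v]$. Finally, each non-universal $u$ has a non-neighbour $w\notin N_{G}[v]$, which is joined to $v$. Hence all remaining vertices form a single component. This is nonempty because $N_{G}[v]\neq V(G)$.

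The main obstacle is the acyclicity bookkeeping. One must check that $\varphi$ is genuinely order-preserving with the fibres as described, and that adjoining $\{u\}\leftrightarrow\{u,w\}$ to the shifted copy of $\vu$ keeps the fibre matching acyclic. I would handle the latter by working in the augmented face poset of $\igu$.
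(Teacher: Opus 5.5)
Your proposal is correct, and the matching you build is exactly the paper's. It consists of the cone pairs $\sigma\leftrightarrow\sigma\cup\{v\}$ on $\igv$, the shifted copies $\{u\}\cup\alpha\leftrightarrow\{u\}\cup\beta$ of each $\vu$, and the extra pairs $\{u\}\leftrightarrow\{u,x_u\}$ for non-universal $u$, and it gives the same count of critical simplices.

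The difference is in how you certify acyclicity. The paper chases $\V$-paths by hand. It lists the short paths through $0$-simplices. For a putative cycle in dimension $d\geq 1$, it argues that either every $\alpha_i$ contains $u$, so deleting $u$ yields a $\vu$-cycle, or the path drops out of the faces containing $u$ into the cone on $v$, where it dies. You package this same one-way behaviour as the Cluster Lemma (Patchwork Theorem) for the order-preserving map $\varphi$ onto $\{0\}\cup N_{G}(v)$. This reduces acyclicity to three local checks: that $\varphi$ is order-preserving, that the element matching is acyclic, and that each shifted $\vu$ with its extra bottom pair is acyclic.

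Your route is more modular. It makes explicit something the paper uses only implicitly: a path can never move sideways from the $u$-fibre to a $u'$-fibre, because distinct neighbours of $v$ are incomparable in $P$. It also treats the low-dimensional pairs uniformly instead of by listing paths. The paper's route is self-contained and avoids quoting an external theorem. You additionally give an actual argument that $1+k$ equals the number of connected components of $\ig$, which the paper only asserts in Observation~\ref{obsmth}.

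One small cleanup: your justification that $\{u\}\leftrightarrow\{u,w\}$ creates no cycle is worded confusingly. The clean reason is that $\{u\}$ is the only $0$-dimensional face of $\varphi^{-1}(u)$. A path inside that fibre that rises from $\{u\}$ to $\{u,w\}$ therefore has no other $0$-face of the fibre to descend to, since $\{w\}$ lies in $\varphi^{-1}(0)$.
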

	We observe that to construct an acyclic matching on $\ig$, by applying Theorem~\ref{mth} repeatedly, we require the existence of a simplicial vertex at each step after deleting the simplicial vertex and its neighbours, chosen in the previous step. A well-known and important family of graphs satisfying this property is the family of chordal graphs, as they are characterised by the \emph{perfect elimination ordering}. Another interesting example, arising from order theory, is the \emph{comparability graph of the $m \times n$ grid poset}, which is an example of a permutation graph and as well as a perfect graph.
	
	We use Theorem~\ref{mth} recursively and provide an algorithmic construction of an acyclic matching on $\ig$, where $G$ is a chordal graph (in Subsection~\ref{subsection chordal}) or belongs to a family $\g_{m,n}$ consisting of graphs obtained from the comparability graph of $m \times n$ grid poset by replacing each vertex by a clique of cardinality at least one (in Subsection~\ref{subsection comparabilty graph}), and obtain the following.
	
	\begin{theorem}\label{grid+chord}
		Let $G$ be a chordal graph or a graph in $\g_{m,n}$. Then $\ig$ admits an acyclic matching, where all $\V$-critical simplices, except possibly one $0$-dimensional simplex, are maximal.
	\end{theorem}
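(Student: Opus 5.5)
We argue by strong induction on $|V(G)|$, applying Theorem~\ref{mth} recursively. To make the induction go through we must know the \emph{structure} of the matching $\V$ built in Theorem~\ref{mth}, not only its counts. We will use the following description, which we expect the proof of Theorem~\ref{mth} to supply (or which we re-verify from that construction). Each $t$-dimensional critical simplex of $\V$ with $t\ge 1$ has the form $\sigma\sqcup\{u\}$, where $u\in N_G(v)$ and $\sigma$ is a $(t-1)$-dimensional $\vu$-critical simplex of $\igu$. The $0$-dimensional critical simplices are one distinguished vertex together with the $k$ universal vertices. Two routine facts then apply. First, if $\sigma$ is maximal in $\igu$, then $\sigma\sqcup\{u\}$ is a maximal independent set of $G$, since any vertex outside $N_G[u]$ extending it would extend $\sigma$. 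Second, a universal vertex $w$ is an isolated vertex of $\ig$, hence maximal.

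The class condition is handled as follows. If $G$ is chordal, every induced subgraph, in particular $G-N_G[u]$, is chordal. By Dirac's theorem a chordal graph that is not complete has two nonadjacent simplicial vertices, so a simplicial $v$ with $N_G[v]\subsetneq V(G)$ exists. For $G\in\g_{m,n}$, we show that $G-N_G[u]$ is again a disjoint union, possibly empty, of graphs of the same type (blown-up smaller grid comparability graphs, or complete graphs). Here the simplicial vertex is taken in the clique blown up from a minimal element $(1,1)$ of the grid. Its closed neighbourhood is the corresponding bag together with the bags of comparable elements. So we should instead take a vertex in the bag of a corner element such as $(1,n)$ or $(m,1)$, whose comparable elements form a chain. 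Deleting $N_G[u]$ removes a union of a "row" and "column" type set and leaves a disjoint union of two blown-up rectangular grids. We will therefore enlarge the class to finite disjoint unions of such graphs and of cliques, and check closure under $G\mapsto G-N_G[u]$. This check is the main technical obstacle.

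The base cases are as follows. If $G$ is complete, or $N_G[v]=\{v\}$, we need separate handling. For a complete graph, $\ig$ is a discrete set of points; we match nothing, and every critical simplex is a maximal $0$-simplex. An isolated vertex $v$ makes $\ig$ a cone, which is collapsible: we match $\sigma$ with $\sigma\cup\{v\}$, leaving only $\{v\}$ critical. This is the one exceptional, possibly non-maximal $0$-simplex. For disconnected $G$ we choose $v$ inside a non-complete component, so the hypothesis $N_G[v]\subsetneq V(G)$ still holds. Alternatively we use the join structure, but staying inside Theorem~\ref{mth} is simpler.

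For the inductive step, apply induction to each $G-N_G[u]$, $u\in N_G(v)$, obtaining $\vu$ with all critical simplices maximal except possibly one vertex $x_u$. Feed these into Theorem~\ref{mth}. Critical simplices of $\V$ of dimension $\ge 2$, and those of dimension $1$ coming from maximal $0$-cells, are maximal by the first fact. The delicate point is the exceptional vertex $x_u$, which would give a possibly non-maximal edge $\{x_u,u\}$. The count in Theorem~\ref{mth}(ii), which subtracts $|N_G(v)|-k$, suggests that exactly one $0$-cell per non-universal $u$ is absorbed. For a universal $u$, $G-N_G[u]$ is empty and contributes a maximal vertex. So we must arrange, via the construction of Theorem~\ref{mth}, that the absorbed $0$-cell is precisely the exceptional $x_u$. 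We will either choose the construction accordingly, or, if $\vu$ has no exceptional vertex, note that any $0$-cell may be absorbed. Finally, the $0$-dimensional critical cells of $\V$ are $\{v\}$ (possibly non-maximal, the allowed exception) and the $k$ isolated universal vertices, which are maximal.
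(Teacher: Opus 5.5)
Your proposal is correct and is essentially the paper's proof. You apply Theorem~\ref{mth} recursively at a simplicial, non-isolated, non-universal vertex: for chordal graphs it comes from a perfect elimination ordering (equivalently Dirac's theorem), and for $\g_{m,n}$ from a corner bag such as $V_{m,0}$. You use that maximality of $\sigma$ in $\igu$ gives maximality of $\sigma\sqcup\{u\}$ in $\ig$, and you choose $x_u$ to be the possibly non-maximal $\vu$-critical $0$-simplex.

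One correction concerns the grid case. For $u\in N_G(v)$ with $v$ in a corner bag, $u$ lies in the bottom row or the last column, so $G-N_G[u]$ is a \emph{single} blown-up rectangle, not a union of two: it lies in $\g_{i-1,n-1}$ if $u\in V_{i,0}$ and in $\g_{m-1,n-j-1}$ if $u\in V_{m,j}$. Hence enlarging the class to disjoint unions, and the closure check you flag as the main obstacle, are unnecessary, though harmless.
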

	Theorem~\ref{grid+chord} provides a required criterion to apply discrete Morse theoretic techniques developed in the subsequent section (see Theorem~\ref{sphmaxcrit} and Corollary~\ref{ind com sph with domination no}) that explicitly determine the homotopy type of the independence complexes of the aforementioned two classes of graphs. 
	
	We note that the \emph{power graph} of the cyclic group of order $p^mq^n$, for distinct primes $p$ and $q$, belongs to $\g_{m,n}$. 
	We mention here that the homotopy types of the independence complexes of chordal graphs and power graphs were obtained by the previously mentioned homotopy theoretical techniques in \cite{kazuhiro} and \cite{nn}, respectively.

	We remark that for the aforementioned two classes of graphs, the acyclic matching on the independence complex, constructed recursively starting from trivial base cases, is an optimal (i.e., one with the least number of critical simplices) one. The constructed acyclic matching also lets us establish the homotopy equivalence between the independence complex and a topological space with a simple structure (viz., a point or a wedge of spheres). However, it is noteworthy that, for a general graph, the homotopy type of its independence complex is not always ``nice'' or easily determinable. Also, obtaining an optimal acyclic matching on a complex is a computationally hard problem in general \cite{eugeciouglu, joswig, lewiner}. Theorem~\ref{mth} may still be applied to construct an efficient acyclic matching on the independence complex, especially when we start with efficient acyclic matchings on the base cases. From the computational aspect, this serves a useful purpose as it provides an important pre-processing framework for efficient homology computation via discrete Morse theory.
	\section{Preliminaries}\label{preli}
	Let $G$ be a (finite, simple, undirected) graph with the vertex set $V(G)$ and the edge set $E(G)$. The \emph{neighbourhood} $N_{G}(v)$ of a vertex $v$ in $G$ is the set of all vertices that are adjacent to $v$, and $N_{G}[v]=N_{G}(v) \sqcup \{v\}$. For $U \subseteq V(G)$, the graph $G - U$ is the subgraph of $G$ induced by the vertex set $V(G) \setminus U$. A subset $I$ of $V(G)$ is an \emph{independent set} if no two vertices of $I$ are adjacent. On the other hand, a \emph{clique} is a subset of $V(G)$ such that any two vertices in that subset are adjacent. A set $S \subseteq V(G)$ is a \emph{dominating set} if every vertex outside $S$ has at least one neighbour in $S$. The \emph{domination number} $\gamma(G)$ is the minimum cardinality of a dominating set in $G$.  A vertex $v$ is a \emph{simplicial vertex} if $N_{G}[v]$ is a clique.
	
	An \textit{(abstract) simplicial complex} (or simply, a complex), say $\mathcal{X}$, is a (finite) collection of finite sets such that 
	if $\beta \in \mathcal{X}$ and $\alpha \subseteq \beta $, then $\alpha \in \mathcal{X}$. Elements of $\mathcal{X}$ are called \emph{simplices} of $\mathcal{X}$.  The \emph{dimension} of a simplex $\alpha$ is $|\alpha| - 1$. If $\dim(\alpha)=d$, then we call $\alpha$ as \emph{$d$-dimensional simplex} (or, a \emph{$d$-simplex}) and denote it by $\alpha^{(d)}$. The \emph{dimension} of a simplicial complex $\mathcal{X}$ is \emph{$\dim(\mathcal{X})$} and is defined as $\max \{\dim (\alpha) \mid \alpha \in \mathcal{X}\}$. Let $f_{d}$ denote the number of $d$-dimensional simplices, and the \emph{$f$-vector} of $\mathcal{X}$ is the sequence ($f_{0}, f_{1}, \ldots, f_{\dim(\mathcal{X})}$). 
	
	We recall that an abstract simplicial complex may be realised as a topological space (unique up to homeomorphism) via its geometric realisation. While discussing the topological properties, we do not differentiate between a complex and its geometric realisation.
	
	The \emph{independence complex} $\ig$ of a graph $G$ is a simplicial complex consisting of all independent sets of $G$. We note that the $0$-simplex corresponding to a universal vertex in a graph $G$ is a maximal $0$-simplex in the independence complex $\ig$ (which corresponds to an isolated point of the space) and vice versa.
	
	\subsection{Basics of discrete Morse theory} \label{basic of dmt}
	
	Let $\mathcal{X}$ be a simplicial complex. The \emph{directed Hasse diagram} of $\mathcal{X}$ is the directed acyclic graph on $\mathcal{X}$ with edges from $\beta$ to $\alpha$, for each pair $\alpha, \beta$, with $\alpha \subsetneq \beta$ and $\dim(\beta)= \dim(\alpha)+1$. An \emph{acyclic matching} $\V$ on $\mathcal{X}$ is a matching in the directed Hasse diagram such that reversing all edges in $\V$ yields an acyclic directed graph. 
	
	More formally, a collection $\V=\{(\alpha, \beta) \mid \alpha \subseteq \beta, \dim(\beta)=\dim(\alpha) +1\}$ is a matching on $\mathcal{X}$, if each simplex appears in at most one pair of $\V$. For a matching $\V$, an \emph{$\V$-path} is a sequence of simplices
	\begin{center}{$\alpha_{0},\beta_{0},\alpha_{1},\beta_{1}, \ldots, \alpha_{r},\beta_{r},\alpha_{r+1}$},
	\end{center}
	where $\dim(\beta_{i})=\dim(\alpha_{j})+1$, for all $i$ and $j$, with $(\alpha_{i}, \beta_{i}) \in \V$ and $( \alpha_{i}\neq) \ \alpha_{i+1} \subsetneq \beta_{i}$, for all $i \in \{0,1, \ldots, r\}$. We observe that a matching $\V$ corresponds to an acyclic matching on $\mathcal{X}$ if every $\V$-path terminates without forming a (directed) cycle. A non-empty simplex $\alpha$ is \emph{$\V$-critical} (or simply, \emph{critical}) if it is not paired in $\V$ or it is paired with $\emptyset$. We note that a \emph{maximal} $0$-simplex of $\mathcal{X}$ is critical with respect to \emph{any} acyclic matching. 
	
	An acyclic matching is closely associated with the notion of \emph{collapse} in simple homotopy theory \cite{cohen}. Specifically, a \emph{collapsible complex} is equivalent to admitting an acyclic matching with a unique critical simplex (of dimension $0$).
	
	Let $f_{d}^{\V}$ denote the number of $d$-dimensional  critical simplices with respect to $\V$. The \emph{critical $f$-vector}, with respect to $\V$, is the sequence ($f_{0}^{\V}, f_{1}^{\V}, \ldots, f_{\dim(\mathcal{X})}^{\V}$). We note that, $f_{d}^{\V} \leq f_{d}$, for each $d \in \{0,1, \ldots , \dim{(\mathcal{X})}\}$.
	
	We recall that a CW-complex is a topological space, formed by attaching homeomorphic copies of \emph{balls} (called \emph{cells}) recursively by increasing dimension.
	We now state the fundamental theorem of discrete Morse theory.
	\begin{theorem} (Fundamental theorem of discrete Morse theory)\label{ftdmt}\cite{formanmain,formanuser}
		Let $\mathcal{X}$ be a simplicial complex with an acyclic matching $\V$. Then $\mathcal{X}$ is homotopy equivalent to a CW-complex $\mathcal{Y}$ (denoted by $\mathcal{X} \simeq \mathcal{Y}$) which contains $f_{d}^{\V}$ cells of dimension $d$, for each $d \geq 0$.

	\end{theorem}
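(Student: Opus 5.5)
The plan is the standard Forman argument: turn the acyclic matching $\V$ into a filtration of $\mathcal{X}$ by subcomplexes, and track the homotopy type as the filtration grows. As the paper's definition allows, if a vertex is paired with $\emptyset$ I first discard that pair, so the vertex becomes critical; then every pair of $\V$ consists of two non-empty simplices. Let $H$ be the directed Hasse diagram of $\mathcal{X}$ with the edges of $\V$ reversed. By hypothesis $H$ is acyclic.

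The first step is to produce a total order $\sigma_1,\sigma_2,\ldots,\sigma_N$ of the non-empty simplices with three properties:
\begin{enumerate}[label=(\alph*)]
\item every face of $\sigma_i$ appears before $\sigma_i$, so each $\mathcal{X}_i=\{\sigma_1,\ldots,\sigma_i\}$ is a subcomplex;
\item each pair $(\alpha,\beta)\in\V$ appears consecutively, with $\alpha$ immediately before $\beta$;
\item the critical simplices are exactly the unpaired ones.
\end{enumerate}
To build it, contract each matched pair of $H$ to a single node. The resulting quotient digraph is still acyclic: a directed cycle in the quotient lifts to a closed $\V$-path or a directed cycle in $H$, which acyclicity forbids. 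Take a topological sort of the quotient, reversed so that faces come first, and expand each contracted node as $\alpha,\beta$. Property (a) needs a check: every codimension-one face $\gamma\neq\alpha$ of $\beta$ comes before the node $\{\alpha,\beta\}$, because the edge $\beta\to\gamma$ survives in $H$. Equivalently, one can define a discrete Morse function $f(\sigma_i)=i$, adjusted so that $f(\alpha)=f(\beta)$ on pairs.

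The second step is to pass along the filtration one node at a time.
\begin{itemize}
\item \textbf{Matched pair $(\alpha,\beta)$.} Here $\alpha$ is a codimension-one face of $\beta$. By (a) and (b), $\alpha$ is a face of no other simplex of $\mathcal{X}_{i}\cup\{\alpha,\beta\}$, because any such simplex would come later. So $\alpha$ is a free face, and the larger complex collapses elementarily onto the smaller one. The two are therefore homotopy equivalent, in fact simple-homotopy equivalent.
\item \textbf{Critical simplex $\sigma$ of dimension $d$.} The whole boundary $\partial\sigma$ already lies in the current complex. So $\mathcal{X}_{i}\cup\{\sigma\}$ is obtained from $\mathcal{X}_i$ by attaching one $d$-cell along $\partial\sigma\cong S^{d-1}$.
\end{itemize}

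The third step is to build $\mathcal{Y}$ by induction. Suppose $\mathcal{X}_i\simeq\mathcal{Y}_i$ via $h_i$, where $\mathcal{Y}_i$ is a CW-complex whose number of $d$-cells equals the number of critical $d$-simplices among $\sigma_1,\ldots,\sigma_i$.
\begin{itemize}
\item At a matched pair, compose $h_i$ with the collapse and keep $\mathcal{Y}_{i+1}=\mathcal{Y}_i$.
\item At a critical $d$-simplex with attaching map $\varphi:S^{d-1}\to\mathcal{X}_i$, use cellular approximation to replace $h_i\circ\varphi$ by a homotopic cellular map $\psi$ into the $(d-1)$-skeleton of $\mathcal{Y}_i$. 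Set $\mathcal{Y}_{i+1}=\mathcal{Y}_i\cup_\psi e^d$.
\end{itemize}
The standard facts then give $\mathcal{X}_i\cup_\varphi e^d\simeq\mathcal{Y}_i\cup_\psi e^d$:
\begin{itemize}
\item attaching a cell along homotopic maps yields homotopy equivalent spaces;
\item a homotopy equivalence $h_i$ extends to a homotopy equivalence of the adjunction spaces, by the gluing lemma for cofibrations.
\end{itemize}

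I expect the main obstacle to be this last bookkeeping step rather than the combinatorics. Cells are added in the order of the filtration, not in increasing dimension, so one must check that $\mathcal{Y}$ is an honest CW-complex. Attaching to the $(d-1)$-skeleton via cellular approximation handles this. Then each cell's attaching map lands in lower-dimensional cells, and the cells can be reordered by dimension without changing the space. The acyclicity argument for the quotient order in the first step also needs care, but it follows directly from the definition of $\V$-paths.
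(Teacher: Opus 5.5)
Your proposal is correct and follows Forman's standard argument, which the paper cites without proof: a linear extension with matched pairs consecutive, elementary collapses at matched pairs, a cell attachment at each critical simplex, and cellular approximation so that $\mathcal{Y}$ is a genuine CW-complex. The one step to spell out is the acyclicity of the contracted digraph, since a naive lift fails when a pair is entered at $\beta$ and left at $\alpha$. A dimension count repairs this: every edge between distinct nodes lowers dimension by one, and passing through a node raises it by at most one (exactly one only when traversed $\alpha\to\beta$), so any cycle in the quotient must be a closed $\V$-path.
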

	Theorem~\ref{ftdmt} implies that determining the topology of a complex would be easier if the number of critical simplices is reduced as much as possible. However, the problem of finding an acyclic matching that minimises the number of critical simplices is an \textsf{NP-hard} problem \cite{eugeciouglu, joswig, lewiner}.
	
	We recall that, for (pointed) topological spaces $X_{1}, X_{2}, \ldots, X_{n}$, the \emph{wedge sum} $\bigvee \limits_{i=1}^{n}X_{i}$ is the quotient space obtained by identifying the marked points (called \emph{base points}) of each $X_{i}$ to a single point. Also, we denote the $d$-dimensional sphere by $\mathbb{S}^{d}$, and abusing the notation, we denote the wedge of $t$ (copies of) $d$-dimensional spheres by $\bigvee \limits_{t} \mathbb{S}^{d}$.
	
	The following is a useful corollary of Theorem~\ref{ftdmt}.
	
	\begin{theorem}\label{sphth} \cite{formanuser}
		Let $\mathcal{X}$ be a (connected) simplicial complex with an acyclic matching $\V$. If \[f_{i}^{\V}=\begin{cases}
			1, & \text{if } i=0,\\
			t \text{ (}\geq 1\text{)}, & \text{if }i=d, \text{ for some } d\geq 1,\\
			0, & otherwise,
		\end{cases}\]
		then $\mathcal{X} \simeq \bigvee \limits_{t} \mathbb{S}^{d}.$
	\end{theorem}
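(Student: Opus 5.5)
The plan is to deduce this directly from Theorem~\ref{ftdmt}, by identifying the CW-complex it produces. By Theorem~\ref{ftdmt}, the acyclic matching $\V$ gives a homotopy equivalence $\mathcal{X} \simeq \mathcal{Y}$. Here $\mathcal{Y}$ is a CW-complex with exactly $f_i^{\V}$ cells of dimension $i$. Under the hypothesis, $\mathcal{Y}$ therefore has exactly one $0$-cell $e^0$, exactly $t$ cells $e^d_1,\ldots,e^d_t$ of dimension $d$, and no other cells. It then suffices to show that such a CW-complex is homeomorphic to $\bigvee_t \mathbb{S}^d$.

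The key step is to analyse the attaching maps. A CW-complex is built skeleton by skeleton, so each $d$-cell $e^d_j$ is attached by a continuous map $\varphi_j \colon \partial D^d = \mathbb{S}^{d-1} \to \mathcal{Y}^{(d-1)}$. Since there are no cells in dimensions $1,\ldots,d-1$, the $(d-1)$-skeleton is $\mathcal{Y}^{(d-1)} = \mathcal{Y}^{(0)} = \{e^0\}$, a single point. Hence every $\varphi_j$ is the constant map to $e^0$.

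Attaching a $d$-disc to a point along its entire boundary yields $D^d/\partial D^d \cong \mathbb{S}^d$. Attaching $t$ such discs simultaneously to the same point identifies all their boundary spheres to that one point. The quotient is therefore $\bigvee_t \mathbb{S}^d$, with base point $e^0$. Since there are no cells of dimension greater than $d$, $\mathcal{Y} = \mathcal{Y}^{(d)} \cong \bigvee_t \mathbb{S}^d$, and so $\mathcal{X} \simeq \bigvee_t \mathbb{S}^d$.

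There is no real obstacle here; the only point needing care is the low-dimensional case $d=1$. In that case the $(d-1)$-skeleton is again the single $0$-cell, so both endpoints of each $1$-cell are glued to $e^0$ and each $1$-cell becomes a loop, giving a wedge of $t$ circles as required. Connectedness of $\mathcal{X}$ is consistent with, and in fact implied by, $f_0^{\V}=1$. No other properties of the matching are needed beyond the cell count supplied by Theorem~\ref{ftdmt}.
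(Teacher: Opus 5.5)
Your proposal is correct and matches the paper's approach: the paper gives no proof and simply cites Forman, calling the statement a corollary of Theorem~\ref{ftdmt}. Your argument is the standard way to carry out that deduction. A CW-complex with a single $0$-cell and $t$ cells of dimension $d$ has constant attaching maps, so it is $\bigvee_t \mathbb{S}^d$.
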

	
	
	A well-known proof-of-concept application of the theorem above is on the simplicial complex consisting of the edge sets of all disconnected (labelled) graphs on the vertex set $\{1,2, \ldots, n\}$. This complex is of considerable importance in knot theory as it appears in Vassiliev's spectral sequence for computing the homology of the space of singular knots \cite{vassiliev}. The homotopy type of this complex is determined by constructing a suitable acyclic matching such that only critical simplices, other than a $0$-simplex, are the edge sets of an easily enumerable family of forests with two components \cite{formanuser,ampc} \cite[Chapter $11$]{kozlov}.

	
	Here we remark that, contrary to the intuition, a complex is not necessarily homotopy equivalent to a wedge of spheres of varying dimensions, even when there are no critical simplices in neighbouring dimensions of each critical simplex. A well-known counterexample \cite[Chapter $17$]{kozlov} uses the non-obvious topological fact that the Hopf map is homotopically non-trivial.
	
	In contrast, the subsequent theorem provides a criterion for homotopy equivalence to a wedge of spheres of varying dimensions. First, we introduce a generalisation of $\V$-paths. A \emph{generalised $\V$-path}, starting from a (critical) simplex $\tau_{0}$, is a sequence
	\[\tau_{0}, \sigma_{1}, \tau_{1}, \ldots, \sigma_{k},\tau_{k}, \sigma_{k+1},\]
	such that $(\sigma_{i},\tau_{i}) \in \V$, for all $i \in \{1,2, \ldots, k\}$ and $\tau_{i} \supsetneq \sigma_{i+1}$ ($\neq \sigma_{i}$), for all $i \in \{0,1, \ldots, k\}$.
	
	\begin{theorem} \label{koz theorem}\cite[Chapter $17$]{kozlov}
		Let $\mathcal{X}$ be a (connected) $d$-dimensional simplicial complex with an acyclic matching $\V$, having a unique critical $0$-simplex and at least one critical simplex of dimension $\geq 1$. Assume that for each critical simplex $\alpha$ of dimension $\geq 1$, the only possible critical simplices that are reachable from $\alpha$ via generalised $\V$-paths are $\alpha$ itself and the critical $0$-simplex. Then
		\[\mathcal{X} \simeq \bigvee_{f_{1}^{\V}} \mathbb{S}^{1} \bigvee_{f_{2}^{\V}} \mathbb{S}^{2} \cdots \bigvee_{f_{d}^{\V}} \mathbb{S}^{d}.\]
	\end{theorem}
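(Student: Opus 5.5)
The statement is quoted from \cite[Chapter~$17$]{kozlov}. The plan is to follow the proof of the fundamental theorem (Theorem~\ref{ftdmt}), keeping track of the attaching maps of the critical cells. Fix a discrete Morse function $F$ whose gradient is $\V$; one can be obtained from a linear extension of the acyclic directed graph formed by the modified Hasse diagram. Scan $\mathcal{X}$ through its sublevel complexes $\mathcal{X}(c)=\bigcup_{F(\beta)\le c}\{\alpha\subseteq\beta\}$. Between consecutive critical values, $\mathcal{X}(c)$ changes only by elementary collapses/expansions along pairs of $\V$. Crossing a critical value $F(\alpha)$ with $\dim\alpha=p$ attaches a $p$-cell. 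The CW-complex $\mathcal{Y}$ of Theorem~\ref{ftdmt} is then built inductively: one $0$-cell $v_0$ for the unique critical vertex, and one $p$-cell $e_\alpha$ for each critical $p$-simplex $\alpha$. The attaching map of $e_\alpha$ is the composite of the inclusion $\partial\alpha\hookrightarrow\mathcal{X}(F(\alpha)-\varepsilon)$ with the homotopy equivalence $\mathcal{X}(F(\alpha)-\varepsilon)\simeq\mathcal{Y}^{(<\alpha)}$ coming from the collapses.

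The key step is to show that each attaching map $\varphi_\alpha\colon\partial e_\alpha\to\mathcal{Y}^{(<\alpha)}$ is null-homotopic. Let $R(\alpha)$ be the downward closure of all simplices occurring in generalised $\V$-paths starting at $\alpha$, with $\alpha$ itself removed. I will show that $\partial\alpha\subseteq R(\alpha)\subseteq\mathcal{X}(F(\alpha)-\varepsilon)$, and that the deformation realising the collapses (flowing a simplex $\sigma$ matched upward with $\tau$ across $\tau$) never leaves $R(\alpha)$. Indeed, each such flow step is exactly one step $\sigma_i\to\tau_i\to\sigma_{i+1}$ of a generalised $\V$-path. It follows that, up to homotopy, $\varphi_\alpha$ factors through the image of $R(\alpha)$ in $\mathcal{Y}$. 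By hypothesis, the only critical cells met along generalised paths from $\alpha$ are $\alpha$ and the critical vertex. Hence the image of $R(\alpha)$ lies in a subcomplex of $\mathcal{Y}$ obtained from $v_0$ without attaching any higher critical cells, i.e.\ it deforms into $\{v_0\}$. Therefore $\varphi_\alpha$ is homotopic to the constant map at $v_0$.

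Given this, the conclusion is standard. Attaching a cell along a null-homotopic map gives, up to homotopy, the wedge of the previous space with a sphere (homotopy invariance of adjunction spaces along homotopic attaching maps). By induction along the critical values, $\mathcal{Y}\simeq\{v_0\}\vee\bigvee_{\alpha}\mathbb{S}^{\dim\alpha}$. Grouping the critical simplices by dimension gives $\mathcal{X}\simeq\bigvee_{f_1^{\V}}\mathbb{S}^1\vee\cdots\vee\bigvee_{f_d^{\V}}\mathbb{S}^d$.

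I expect the main obstacle to be the middle step: checking that $R(\alpha)$, together with the restriction of $\V$, collapses onto $v_0$. The difficulty is that $R(\alpha)$ is only a downward closure. A simplex in $R(\alpha)$ that is a face of codimension $\ge 2$ of a reached simplex may be matched upward with a partner outside $R(\alpha)$, and so would become spuriously critical for the restricted matching. My first attempt at a fix is to enlarge $R(\alpha)$ to its closure under both taking faces and adding $\V$-partners, and then verify two things. First, any newly added simplex is still reachable by a generalised path from some facet of a reached simplex; this uses the freedom in the definition to pass to an arbitrary facet $\sigma_{i+1}\subsetneq\tau_i$. Second, acyclicity of $\V$ makes the restricted matching on this closure an acyclic matching whose only critical simplex is $v_0$. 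If this closure argument fails, the fallback is to argue directly with Forman's gradient flow on chains: the flow of $\partial\alpha$ stabilises, by acyclicity, on a chain supported on critical simplices, and that chain must then be supported on $v_0$ alone. I would then upgrade this chain-level statement to a homotopy-level statement via the collapses.
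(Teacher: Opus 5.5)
The paper gives no proof of this statement; it quotes it from Kozlov's book. Your argument therefore has to stand on its own, and in substance it does. Everything reduces to showing that $\partial\alpha$ lies in a contractible subcomplex of $\mathcal{X}(F(\alpha)-\varepsilon)$, and $R(\alpha)$ is the right candidate.

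\textbf{The obstacle you anticipate does not arise.} In the definition used here, $\sigma_{i+1}$ may be \emph{any} proper face of $\tau_i$, not only a facet. So the set of simplices occurring on generalised paths from $\alpha$ is already closed under faces and under $\V$-partners:
\begin{itemize}
\item a face of $\tau_i$ is itself a legal $\sigma_{i+1}$, and a face of $\sigma_i$ is a face of $\tau_{i-1}$;
\item the upward partner of a $\sigma_i$ is a legal $\tau_i$, and a downward partner is a face.
\end{itemize}
Dimensions never increase along such a path, so $\alpha$ is maximal in this set. Hence $R(\alpha)\setminus\{\alpha\}$ is a subcomplex on which $\V$ restricts to an acyclic matching. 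By hypothesis its only critical simplex is $v_0$, which must appear because every acyclic matching on a non-empty complex has a critical vertex. So $R(\alpha)\setminus\{\alpha\}$ is collapsible, and neither the enlargement nor the chain-level fallback is needed.

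\textbf{Two steps need restating.}
\begin{itemize}
\item The sentence ``the image of $R(\alpha)$ lies in a subcomplex of $\mathcal{Y}$ obtained from $v_0$ without attaching any higher critical cells'' is not justified, since a homotopy equivalence need not respect cells. It is also unnecessary. We have $\varphi_\alpha=h\circ\iota$ with $\iota\colon\partial\alpha\hookrightarrow R(\alpha)\setminus\{\alpha\}$, and any map factoring through a contractible space is null-homotopic.
\item The containment $R(\alpha)\setminus\{\alpha\}\subseteq\mathcal{X}(F(\alpha)-\varepsilon)$ is asserted, and flow-invariance does not give it. It does hold for every discrete Morse function with gradient $\V$. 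Induct on codimension, using that a simplex has at most one codimension-one coface of no larger value. This shows that every proper face of the critical $\alpha$ has value $<F(\alpha)$. It also shows that every face of $\tau_i$ not contained in $\sigma_i$ has value $<F(\tau_i)\le F(\sigma_i)$. Faces of $\sigma_i$ are faces of $\tau_{i-1}$, so induction along the path finishes the argument.
\end{itemize}

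\textbf{A shorter route.} You could bypass $F$, sublevel complexes and attaching maps entirely. Let $S=\bigcup_{\alpha}\bigl(R(\alpha)\setminus\{\alpha\}\bigr)$, with $\alpha$ ranging over the critical simplices of dimension $\ge 1$.
\begin{itemize}
\item $S$ is closed under faces and partners. By the hypothesis it contains no critical simplex except $v_0$, so it is collapsible.
\item Adding these $\alpha$ to $S$ gives a subcomplex $\mathcal{X}'$, because $\partial\alpha\subseteq S$.
\item $\mathcal{X}\setminus\mathcal{X}'$ is a union of $\V$-pairs, so $\mathcal{X}\searrow\mathcal{X}'$.
\item $\mathcal{X}'\simeq\mathcal{X}'/S$, which is visibly the wedge of spheres.
\end{itemize}
Your version stays inside Forman's general machinery and yields the finer fact that each attaching map in the Morse CW model is individually trivial. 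The direct version uses only the relative collapsing theorem and the fact that collapsing a contractible subcomplex is a homotopy equivalence.
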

	The following theorem provides a more specific yet more natural criterion compared to the one in Theorem~\ref{koz theorem}, which is easier to verify in practice.
	\begin{theorem}\label{sphmaxcrit}
		Let $\mathcal{X}$ be a $d$-dimensional simplicial complex with an acyclic matching $\V$. Suppose all the critical simplices are maximal, except possibly one
		critical $0$-simplex. Then either $\mathcal{X}$ is collapsible, or
		\[\mathcal{X} \simeq \bigvee\limits_{f_{0}^{\V}-1}\mathbb{S}^{0}\bigvee_{f_{1}^{\V}} \mathbb{S}^{1} \bigvee_{f_{2}^{\V}} \mathbb{S}^{2} \cdots \bigvee_{f_{d}^{\V}} \mathbb{S}^{d}.\]
	\end{theorem}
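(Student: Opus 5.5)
The plan is to reduce to Theorem~\ref{koz theorem} after separating the isolated critical vertices. Let $\alpha_0$ denote the one critical $0$-simplex that may fail to be maximal. Every other critical $0$-simplex is maximal, so it is an isolated point of $\mathcal{X}$. Write $\mathcal{X}=\mathcal{X}'\sqcup P$, where $P$ is the set of these isolated critical vertices. If some component of $\mathcal{X}$ contained no critical $0$-simplex, it would contradict Theorem~\ref{ftdmt}: any CW-complex homotopy equivalent to a nonempty component has a $0$-cell. So every non-isolated component contains $\alpha_0$, and $\mathcal{X}'$ is connected. (If $\alpha_0$ is itself maximal, $\mathcal{X}'$ is just a point.)

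Next, $\V$ restricts to an acyclic matching on $\mathcal{X}'$. The isolated points of $P$ are critical and unmatched, so no pair of $\V$ involves them. The critical simplices of $\mathcal{X}'$ are $\alpha_0$ together with maximal simplices of dimension $\geq 1$. If there are none of the latter, $\mathcal{X}'$ has a unique critical simplex and is collapsible. Otherwise we verify the hypothesis of Theorem~\ref{koz theorem}.

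This verification is the key step. Take a critical $\alpha$ of dimension $\geq 1$ and a generalised $\V$-path $\alpha=\tau_0,\sigma_1,\tau_1,\ldots$. Each $\sigma_i$ is a facet of $\tau_{i-1}$ and is matched upward with $\tau_i$, so $\sigma_i$ is not critical. A path can end at a critical simplex only at its last term $\sigma_{k+1}$, which is a facet of $\tau_k$. That facet is not maximal, so it can be critical only if it is $\alpha_0$. Critical simplices of positive dimension are maximal, so none of them is a proper face of anything and none can appear as some $\sigma_j$. Hence the only critical simplices reachable from $\alpha$ are $\alpha$ itself and $\alpha_0$. The main subtlety is the convention for paths: one must check that a simplex matched with $\emptyset$ counts as critical, and that $\alpha_0$ appearing as a facet is allowed. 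Both are consistent with the theorem's conclusion.

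Theorem~\ref{koz theorem} then gives $\mathcal{X}'\simeq\bigvee_{f_1^{\V}}\mathbb{S}^1\vee\cdots\vee\bigvee_{f_d^{\V}}\mathbb{S}^d$. Finally we add back the $f_0^{\V}-1$ isolated points. A disjoint union of a pointed space with $f_0^{\V}-1$ extra points is the wedge of that space with $\bigvee_{f_0^{\V}-1}\mathbb{S}^0$, taking base point $\alpha_0$ and one point of each $\mathbb{S}^0$. This yields the stated formula. If there are no positive-dimensional critical simplices and no extra points, $\mathcal{X}$ is collapsible. If there are no positive-dimensional critical simplices but there are extra points, $\mathcal{X}\simeq\bigvee_{f_0^{\V}-1}\mathbb{S}^0$, which is also covered by the formula.
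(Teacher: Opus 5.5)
Your proposal is correct and follows essentially the same route as the paper: remove the isolated (maximal) critical $0$-simplices, restrict $\V$ to the connected remainder $\mathcal{X}'$ with its unique critical $0$-simplex, use maximality of the positive-dimensional critical simplices to verify the reachability hypothesis of Theorem~\ref{koz theorem}, and reattach the $f_0^{\V}-1$ points as $\mathbb{S}^0$ wedge summands. Your use of Theorem~\ref{ftdmt} to show that every non-isolated component contains $\alpha_0$, and hence that $\mathcal{X}'$ is connected, justifies a step the paper only asserts; the paper disposes of the $\emptyset$-pairing subtlety you flag by assuming without loss of generality that $\emptyset$ is unpaired.
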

	\begin{proof}
		Suppose $\mathcal{X}$ contains only $0$-critical simplices. If $\mathcal{X}$ contains only one critical $0$-simplex, then $\mathcal{X}$ is collapsible. If $\mathcal{X}$ contains at least two $0$-dimensional critical simplices, then $\mathcal{X}$ is homotopy equivalent to $f_{0}^{\V}$ number of isolated points, which may be realised as the wedge of ($f_{0}^{\V}-1$) spheres of dimension $0$, that is
		\[\mathcal{X} \simeq \bigvee \limits_{f_{0}^{\V}-1}\mathbb{S}^{0}.\]
		
		Next, suppose $\mathcal{X}$ contains a critical simplex of dimension $\geq 1$. Without loss of generality, let $\emptyset$ be unpaired in $\V$. Let $\mathcal{X}'$ be the (connected) complex obtained by deleting all maximal $\V$-critical $0$-simplices (i.e., isolated points) from $\mathcal{X}$. We note that $\V$ is an acyclic matching on $\mathcal{X}'$.
		Let $\alpha \in \mathcal{X}'$ be a $\V$-critical simplex of dimension $\geq 1$. Since, all critical simplices of dimension $\geq 1$ are maximal, no generalised $\V$-path, starting from $\alpha$, reaches another critical simplex of dimension $\geq 1$. Thus, from Theorem~\ref{koz theorem}, it follows that
		\[\mathcal{X}' \simeq \bigvee_{f_{1}^{\V}} \mathbb{S}^{1} \bigvee_{f_{2}^{\V}} \mathbb{S}^{2} \cdots \bigvee_{f_{d}^{\V}} \mathbb{S}^{d}.\] Therefore, 
		\[\mathcal{X} \simeq \bigvee\limits_{f_{0}^{\V}-1}\mathbb{S}^{0}\bigvee_{f_{1}^{\V}} \mathbb{S}^{1} \bigvee_{f_{2}^{\V}} \mathbb{S}^{2} \cdots \bigvee_{f_{d}^{\V}} \mathbb{S}^{d}.\]
	\end{proof}
	In particular, when $\mathcal{X}$ is the independence complex of a graph with an acyclic matching $\V$, satisfying the property given in Theorem~\ref{sphmaxcrit}, since a maximal independent set of a graph is a dominating set, the dimension of each maximal critical simplex is at least $\gamma(G)-1$. Thus, we have the following corollary.
	
	\begin{corollary}\label{ind com sph with domination no}
		Let $\ig$ be the independence complex of a graph $G$ with an acyclic matching $\V$, such that all the critical simplices are maximal, except possibly one
		critical $0$-simplex. Then $\ig$ is collapsible or is homotopy equivalent to a wedge of spheres (of varying dimensions), such that each sphere is of dimension at least $ \gamma(G)-1$.
	\end{corollary}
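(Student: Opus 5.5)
This corollary follows from Theorem~\ref{sphmaxcrit} once we bound the dimensions of the spheres that appear. Apply Theorem~\ref{sphmaxcrit} to $\mathcal{X}=\ig$ with the given acyclic matching $\V$. Either $\ig$ is collapsible, in which case we are done, or
\[\ig \simeq \bigvee\limits_{f_{0}^{\V}-1}\mathbb{S}^{0}\bigvee_{f_{1}^{\V}} \mathbb{S}^{1} \cdots \bigvee_{f_{d}^{\V}} \mathbb{S}^{d}.\]
In the second case the number of spheres of dimension $t$ is $f_{t}^{\V}$ for $t\ge 1$, and $f_0^{\V}-1$ for $t=0$. It therefore suffices to show that whenever a sphere of dimension $t$ appears, we have $t \geq \gamma(G)-1$.

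\textbf{Spheres of dimension $t\geq 1$.} Such a sphere comes from a critical $t$-simplex $\alpha$. By hypothesis every critical simplex is maximal, except possibly one critical $0$-simplex, so $\alpha$ is maximal in $\ig$. Hence $\alpha$ is a maximal independent set of $G$. Any maximal independent set is dominating: a vertex outside $\alpha$ with no neighbour in $\alpha$ could be added to $\alpha$, contradicting maximality. Therefore $|\alpha| \geq \gamma(G)$, that is, $t=\dim \alpha = |\alpha|-1\geq \gamma(G)-1$.

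\textbf{Spheres of dimension $0$.} These appear only when $f_0^{\V}\geq 2$, so at least one critical $0$-simplex $\{w\}$ is maximal. Then $\{w\}$ is a maximal independent set, so by the same argument it is dominating. Thus $\gamma(G)\le 1$ and $0 \geq \gamma(G)-1$. The single non-maximal critical $0$-simplex allowed by the hypothesis is harmless, because it contributes no sphere: it accounts for the ``$-1$'' in $f_0^{\V}-1$.

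No step here is a genuine obstacle. The only point needing care is the bookkeeping in the $0$-dimensional case: we must check that the possibly non-maximal critical $0$-simplex is exactly the one absorbed as the base point, and does not itself produce a sphere.
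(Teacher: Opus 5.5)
Your proof is correct and takes the same route as the paper: apply Theorem~\ref{sphmaxcrit}, then note that every maximal critical simplex is a maximal independent set, hence dominating, so its dimension is at least $\gamma(G)-1$. Your explicit handling of the $\mathbb{S}^{0}$ summands fills in a step the paper leaves implicit: when $f_0^{\V}\geq 2$ some critical $0$-simplex is maximal, which forces $\gamma(G)\le 1$. That argument already makes the ``base point'' bookkeeping you flag at the end unnecessary.
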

	Here we remark that although the homology of a simplicial complex satisfying the criterion of Theorem~\ref{sphmaxcrit} can be readily inferred from its homotopy type, the task of homology computation itself, using discrete Morse theory, is also trivial, as each boundary operator of the chain complex becomes trivial.   
	
	\section{Construction of an acyclic matching on the independence complex of a graph with a simplicial vertex}\label{construction of ig}
	In this section, we construct an acyclic matching $\V$ on $\ig$ of a graph $G$ with a simplicial vertex and also provide a single-step recursive count of the $\V$-critical simplices of $\ig$.
	
	\begin{observation}\label{obsmth}
		Let $v \in V(G)$ be such that $N_{G}(v)$ is a clique.
		\begin{enumerate}
			\item For $u \in N_{G}(v)$, if $\alpha \in \igu$, then $\alpha \in \igv$,
			that is \[\bigcup \limits_{u \in N_{G}(v)}\igu \subseteq \igv.\]
			
			\item $\ig$ can be partitioned as follows.
			\begin{align*}
				\ig =&\bigcup_{u \in N_{G}(v)} \igu\\
				&\bigsqcup \left(\igv \setminus (\bigcup_{u \in N_{G}(v)} \igu )\right)\\
				&\bigsqcup_{u \in N_{G}(v)} \{\alpha \cup\{u\} \mid \alpha  \in \igu\} \\
				&\bigsqcup \{\alpha \cup\{v\} \mid \alpha  \in \igv \}.
			\end{align*}
			\item The simplicial complex obtained from $\ig$ after deleting the maximal $0$-simplices (i.e., isolated points) is connected.
			
		\end{enumerate}
	\end{observation}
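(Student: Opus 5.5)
The plan is to verify the three parts directly from the definitions. The one structural fact used throughout is that $N_{G}[v]\subseteq N_{G}[u]$ for every $u\in N_{G}(v)$. Indeed, $N_{G}[v]$ is a clique containing $u$, so every vertex of $N_{G}[v]$ other than $u$ is adjacent to $u$.

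For part (1), let $\alpha\in\igu$. Then $\alpha$ is an independent set of $G$ with $\alpha\subseteq V(G)\setminus N_{G}[u]$. Since $V(G)\setminus N_{G}[u]\subseteq V(G)\setminus N_{G}[v]$, the set $\alpha$ is an independent set of $G-N_{G}[v]$, that is, $\alpha\in\igv$. Taking the union over $u\in N_{G}(v)$ gives the stated inclusion.

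For part (2), I will classify each $\alpha\in\ig$ by how it meets the clique $N_{G}[v]$. Since $\alpha$ is independent, $|\alpha\cap N_{G}[v]|\le 1$, so exactly one of three cases holds.
\begin{enumerate}
\item $\alpha\cap N_{G}[v]=\emptyset$. Then $\alpha\in\igv$, and $\alpha$ lies either in $\bigcup_{u}\igu$ or in its complement inside $\igv$. These are the first two blocks; they are disjoint by construction, and by part (1) together they make up all of $\igv$.
\item $\alpha\cap N_{G}[v]=\{u\}$ for some $u\in N_{G}(v)$. Then $\alpha\setminus\{u\}$ avoids $N_{G}[u]$ by independence, so $\alpha=\beta\cup\{u\}$ with $\beta\in\igu$. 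Conversely, every such $\beta\cup\{u\}$ is independent. Different $u$ give disjoint families, since $\alpha$ contains at most one vertex of the clique.
\item $\alpha\cap N_{G}[v]=\{v\}$. Then, in the same way, $\alpha=\beta\cup\{v\}$ with $\beta\in\igv$.
\end{enumerate}
Simplices in the first two blocks contain no vertex of $N_{G}[v]$, while those in the last two contain exactly one, and that vertex determines the block. Hence the four blocks are pairwise disjoint and cover $\ig$.

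For part (3), the isolated points of $\ig$ are exactly the universal vertices of $G$, so it suffices to connect every non-universal vertex $w$ to $v$ by a path in the $1$-skeleton. If $v$ is itself universal, then $N_{G}[v]=V(G)$ is a clique, so every vertex is universal and the remaining complex is empty; the statement is then vacuous. Otherwise $v$ survives the deletion, and there are two cases for $w$.
\begin{enumerate}
\item $w\notin N_{G}[v]$. Then $\{v,w\}$ is an edge of $\ig$.
\item $w\in N_{G}(v)$. Since $w$ is not universal, it has a non-neighbour $x$. Here $x\neq v$ because $w$ is adjacent to $v$, and $x\notin N_{G}(v)$ because $N_{G}(v)$ is a clique containing $w$. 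So $x\notin N_{G}[v]$, and $w,x,v$ is a path in $\ig$.
\end{enumerate}

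The only real care needed is in part (2): checking that the union in the first block, though not itself disjoint across different $u$, is disjoint from the other blocks. The disjointness of the remaining blocks rests entirely on the fact that an independent set meets the clique $N_{G}[v]$ in at most one vertex.
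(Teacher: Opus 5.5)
Your proof is correct and complete. The paper states this as an observation and gives no proof; your argument, which combines $N_{G}[v]\subseteq N_{G}[u]$ for $u\in N_{G}(v)$ with the fact that an independent set meets the clique $N_{G}[v]$ in at most one vertex, is the routine verification the paper leaves to the reader. Your one unstated step, passing from ``$N_{G}(v)$ is a clique'' to ``$N_{G}[v]$ is a clique'', is immediate because $v$ is adjacent to every vertex of $N_{G}(v)$, and your treatment of the degenerate complete-graph case in part (3) is fine (the paper only uses the statement when $N_{G}[v]\subsetneq V(G)$).
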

	
	\begin{lemma}\label{lmiso}
		Let $v \in V(G)$ be an isolated vertex of $G$. Then, there is an acyclic matching on $\ig$ such that the only critical simplex is $\{v\}$ (consequently, $\ig$ is collapsible). 
	\end{lemma}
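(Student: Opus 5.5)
The plan is to use the standard ``cone'' matching with respect to the isolated vertex $v$. Since $v$ is isolated in $G$, for every independent set $\alpha$ of $G$ with $v \notin \alpha$, the set $\alpha \cup \{v\}$ is again independent. The same holds for $\alpha = \emptyset$ if we include the empty simplex. So $\ig$ is a cone with apex $v$ over $\mathcal{I}(G - v)$. Define
\[
\V = \{(\alpha, \alpha \cup \{v\}) \mid \alpha \in \ig,\ v \notin \alpha\},
\]
where $\alpha$ ranges over all simplices not containing $v$, including $\emptyset$. Each simplex of $\ig$ either contains $v$ or does not, and the map $\alpha \mapsto \alpha \cup \{v\}$ is a bijection between these two classes. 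Hence $\V$ is a matching in which every simplex, including $\emptyset$, is paired. That would leave no critical simplices, which is impossible for a nonempty complex in the convention where $\emptyset$ counts. Following the paper's convention (a simplex paired with $\emptyset$ is critical), the pair $(\emptyset, \{v\})$ makes $\{v\}$ the unique critical simplex, as the statement requires.

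The main step is acyclicity. Consider a $\V$-path $\alpha_0, \beta_0, \alpha_1, \ldots$ with $(\alpha_i, \beta_i) \in \V$. Then $v \notin \alpha_i$ and $\beta_i = \alpha_i \cup \{v\}$. The next simplex $\alpha_{i+1}$ is a codimension-one face of $\beta_i$ different from $\alpha_i$, so it is obtained by removing some vertex other than $v$; hence $v \in \alpha_{i+1}$. But then $\alpha_{i+1}$ is matched only as the larger element of a pair, namely with $\alpha_{i+1} \setminus \{v\}$. So there is no $\beta_{i+1}$ with $(\alpha_{i+1}, \beta_{i+1}) \in \V$, and every $\V$-path stops after one step. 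In particular no cycle can form, so $\V$ is acyclic.

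Finally, collapsibility follows from the remark in Subsection~\ref{basic of dmt}: a complex admitting an acyclic matching with a unique critical simplex, necessarily of dimension $0$, is collapsible. Equivalently, by Theorem~\ref{ftdmt}, $\ig$ is homotopy equivalent to a single point. I expect no real obstacle here; the only care needed is the bookkeeping of the empty simplex so that the count of critical simplices matches the paper's convention.
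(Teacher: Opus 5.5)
Your proposal is correct and is essentially the paper's proof. The paper uses the same cone matching $\{(\alpha,\alpha\cup\{v\}) \mid \alpha \in \mathcal{I}(G-N_{G}[v])\}$ (here $\mathcal{I}(G-N_{G}[v]) = \mathcal{I}(G-v)$, since $N_{G}[v]=\{v\}$), with $(\emptyset,\{v\})$ leaving $\{v\}$ critical under its convention. The paper only asserts acyclicity, and your observation that every path $\alpha_0,\alpha_0\cup\{v\},\alpha_1$ halts because $v\in\alpha_1$ supplies that step.

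One side remark is inaccurate, though it does not affect the proof. If $\emptyset$ is treated as a genuine $(-1)$-cell, an acyclic matching with no critical cells at all is perfectly possible; a cone is the standard example. So that case is not ``impossible''; it is only the paper's convention that makes $\{v\}$ count as critical.
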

	\begin{proof}
		
		We may verify that $\V= \{(\alpha, \alpha \cup \{v\}) \mid \alpha \in \igv, v \notin \alpha\}$ is an acyclic matching on $\ig$ such that $\{v\}$ is the only $\V$-critical simplex.
	\end{proof}
	\begin{observation}\label{lmcom}
		Let $G$ be a complete graph. Then, with respect to any acyclic matching on $\ig$, each non-empty simplex is critical, maximal, and is of dimension $0$.
	\end{observation}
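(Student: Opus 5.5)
The statement is Observation~\ref{lmcom}: $G$ is complete, and every non-empty simplex of $\ig$ must be critical, maximal and $0$-dimensional for any acyclic matching. The plan is to determine $\ig$ explicitly and then note that such a complex admits almost no matching pairs.

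First I describe the simplices. In a complete graph any two distinct vertices are adjacent. So an independent set has at most one vertex, and
\[\ig = \{\emptyset\} \cup \bigl\{\{w\} \mid w \in V(G)\bigr\}.\]
Every non-empty simplex is therefore a $0$-simplex $\{w\}$. It is maximal, because any strictly larger set would contain two adjacent vertices and so would not be independent.

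Next, let $\V$ be any acyclic matching on $\ig$. A pair $(\alpha,\beta)\in\V$ requires $\dim(\beta)=\dim(\alpha)+1$ with $\alpha\subsetneq\beta$. There are no $1$-simplices, so the only possible pairs are $(\emptyset,\{w\})$. By the definition of criticality in Subsection~\ref{basic of dmt}, a non-empty simplex is critical if it is unpaired or paired with $\emptyset$. Each $\{w\}$ is either paired with $\emptyset$ or unpaired, so each is critical. This also agrees with the earlier remark that maximal $0$-simplices are critical with respect to any acyclic matching.

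No step is a real obstacle; the only point needing care is the convention that a simplex paired with $\emptyset$ still counts as critical.
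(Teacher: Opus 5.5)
Your proof is correct and matches the reasoning the paper leaves implicit. The paper states this observation without proof and relies on its earlier remarks in Section~\ref{preli}: a universal vertex gives a maximal $0$-simplex of $\ig$, and a maximal $0$-simplex is critical under any acyclic matching, since being paired with $\emptyset$ still counts as critical. Your explicit description of $\ig$ as $\emptyset$ together with the singletons spells that out.
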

	We now proceed to prove the main theorem of this article.
	\begin{proof}[Proof of Theorem~\ref{mth}]
		Let $v \in V(G)$ such that $v$ is a simplicial vertex. Let $\vu$ be an acyclic matching on $\igu$, for all $u \in N_{G}(v)$. We construct an acyclic matching $\V$ on $\mathcal{I}(G)$ as follows.
		\begin{enumerate}[label=(\roman*)]
			\item For each pair $(\alpha, \beta) \in \vu$ (with $\alpha \neq \emptyset$), we  pair $\alpha \cup\{u\}$ with $\beta \cup \{u\}$ in $\V$ (i.e., $(\alpha \cup \{u\}, \beta \cup \{u\}) \in \V$), for all $u \in N_{G}(v).$
			\item Furthermore, for all $u \in N(v)$ such that $G - N_{G}[u] \neq \emptyset $, we pair $\{u\}$ in $\V$ as follows. For such a vertex $u$, since $\igu \neq \{\emptyset\}$, there is at least one $\vu$-critical $0$-simplex in $\igu$. We choose and fix a critical $0$-simplex $\{x_{u}\} \in \igu$. We pair $\{u\}$ with $\{u,x_{u}\}$ in $\V$ (i.e., $(\{u\},\{u, x_{u}\}) \in \V)$, for all such $u$.
			
			\item Finally, for all $\alpha \in \igv$, we add $(\alpha, \alpha \cup \{v\})$ to $ \V$.
		\end{enumerate}
		
		From Observation~\ref{obsmth}, it follows that each simplex in $\mathcal{I}(G)$ is either uniquely paired in $\V$ or left unpaired. Therefore, the matching $\V$ is well-defined. 
		Next, we prove the acyclicity of $\V$.
		
		We observe that the only possible (maximal) $\V$-path, starting from $\{u\}$, is \[\{u\}, \{u, x_{u}\}, \{x_{u}\}, \{x_{u},v\}, \{v\},\] and the only possible (maximal) $\V$-path, starting from $\{w\}$ ($\neq \{u\}$), is \[\{w\}, \{w,v\}, \{v\},\] none of which extends to a cycle.
		
		Let, if possible, \[\alpha_{0}^{(d)},\beta_{0}^{(d+1)},\alpha_{1}^{(d)},\beta_{1}^{(d+1)}, \ldots,  \alpha_{r+1}^{(d)}=\alpha_{0}^{(d)}, \text{ (}d \geq 1\text{)}\] be an $\V$-path forming a cycle. Suppose, for all $i \in \{0,1, \ldots ,r+1\}$, $\alpha_{i}$ contains $u$. Then $ u \in \beta_{i}$, for all $i \in \{0,1, \ldots ,r\}$. This implies that the corresponding $\vu$-path obtained by 
		removing $u$ from each simplex, that is
		\[
		{(\alpha_{0} \setminus \{u\})}^{(d-1)},{(\beta_{0} \setminus \{u\} )}^{(d)},{(\alpha_{1} \setminus \{u\})}^{(d-1)}, \ldots, {(\beta_{r} \setminus \{u\})}^{(d)}, {(\alpha_{r+1} \setminus \{u\})}^{(d-1)}\] 
		forms a cycle, which contradicts the acyclicity of $\vu$.
		
		Now, suppose there exists a least $t \in \{0,1, \ldots, r\}$ such that $u \notin \alpha_{t}.$ Then $\beta_{t} = \alpha_{t} \sqcup \{v\}$ and $\alpha_{t+1}$ must contain $v$. By construction, this $\V$-path cannot be extended further. So this is a contradiction.
		Hence, the matching $\V$ is acyclic.
		
		We now characterise all the $\V$-critical simplices.
		\begin{enumerate}[label=(\roman*)]
			\item  The $0$-simplex $\{v\}$ is critical with respect to $\V$. Also, for all universal vertex $u \in V(G)$ (thus, $u \in N_{G}(v)$), the $0$-simplex $\{u\}$ is $\V$-critical. 
			\item $\V$-critical $1$-simplices are $\{w,u\}$, where $\{w\} \in \igu$, with $w \neq x_{u}$, is a $\vu$-critical $0$-simplex.
			
			\item All other $\V$-critical simplices (of dimension $\geq 2$) of $\ig$ are of the type $\alpha \sqcup \{u\}$ for each $\vu$-critical simplex $\alpha$ (of dimension $\geq 1$)  of $\igu$.
		\end{enumerate}
		The theorem follows from the count of critical simplices (of each dimension), from the characterisation above. 
	\end{proof}
	
	\subsection{Independence complex of chordal graphs}\label{subsection chordal}
	A graph is a \emph{chordal graph} if there is no induced cycle of length greater than or equal to $4$. A \emph{perfect elimination ordering} of a graph is an ordering on its vertices $v_{1},v_{2}, \ldots, v_{n}$ such that, for each vertex $v_{i}$, the neighbours of $v_{i}$ that appear after $v_{i}$ in the ordering, form a clique.
	The following theorem characterises chordal graphs.
	\begin{theorem} \cite{dirac,fulkerson, rose} \label{PEO}
		A graph $G$ is chordal if and only if it admits a perfect elimination ordering.
	\end{theorem}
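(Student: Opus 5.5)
The equivalence in Theorem~\ref{PEO} is classical, and I would prove the two directions separately: the backward direction is a one-line argument, while the forward direction rests on Dirac's lemma that every chordal graph has a simplicial vertex.

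\emph{Perfect elimination ordering $\Rightarrow$ chordal.} Suppose $v_{1},\ldots,v_{n}$ is a perfect elimination ordering and $C$ is an induced cycle of length at least $4$. Let $v_{i}$ be the vertex of $C$ that appears earliest in the ordering. Its two neighbours on $C$ both appear after $v_{i}$. By the defining property of the ordering they must be adjacent, which gives a chord of $C$, a contradiction.

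\emph{Chordal $\Rightarrow$ perfect elimination ordering.} Induced subgraphs of chordal graphs are chordal, so it suffices to show that every chordal graph has a simplicial vertex. Then I take $v_{1}$ simplicial in $G$, $v_{2}$ simplicial in $G-\{v_{1}\}$, and so on. Each $v_{i}$ is simplicial in $G-\{v_{1},\ldots,v_{i-1}\}$, so its later neighbours form a clique. To get a simplicial vertex I would prove the stronger statement of Dirac: \emph{every chordal graph is either complete or has two non-adjacent simplicial vertices.} The strengthening is needed to make the induction go through.

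The first ingredient is that minimal separators are cliques. Let $a,b$ be non-adjacent vertices and $S$ a minimal $a$--$b$ separator. Let $A$ and $B$ be the components of $G-S$ containing $a$ and $b$. By minimality, every $x\in S$ has a neighbour in $A$ and a neighbour in $B$. For distinct $x,y\in S$, take a shortest $x$--$y$ path with interior in $A$ and another with interior in $B$. Their union is a cycle of length at least $4$. Since the paths are shortest and $A$, $B$ are separated by $S$, the only possible chord is $xy$. Chordality then forces $x$ and $y$ to be adjacent.

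Now I induct on $|V(G)|$. If $G$ is complete we are done. Otherwise pick non-adjacent $a,b$ and a minimal separator $S$, with $A$ and $B$ as above. The graph $G[A\cup S]$ is chordal and smaller than $G$. By induction it is either complete, in which case any vertex of $A$ is simplicial in it, or it has two non-adjacent simplicial vertices. In the latter case these cannot both lie in the clique $S$, so one of them lies in $A$. Either way there is a vertex $w\in A$ that is simplicial in $G[A\cup S]$. Since $N_{G}[w]\subseteq A\cup S$, the vertex $w$ is simplicial in $G$. The same argument applied to $B$ gives a simplicial vertex in $B$, and it is non-adjacent to $w$ because $S$ separates $A$ from $B$.

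The main obstacle is the clique-separator step: one has to check carefully that the two shortest paths really produce a chordless cycle apart from the possible chord $xy$.
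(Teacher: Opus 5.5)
The paper gives no proof of this theorem; it quotes it from Dirac, Fulkerson--Gross and Rose. Your argument is the classical proof from those sources (minimal separators are cliques, Dirac's two non-adjacent simplicial vertices lemma, then iterated elimination), and it is correct as written. The one detail worth stating explicitly is in the separator step: assume $x,y\in S$ are non-adjacent for contradiction, so that each shortest path has an interior vertex and the resulting cycle genuinely has length at least $4$.
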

	This characterisation lets us apply Theorem~\ref{mth} recursively and get the following.
	\begin{theorem}\label{chordalth}
		Let $G$ be a chordal graph. Then there is an acyclic matching $\V$ on $\ig$ such that all $\V$-critical simplices, except possibly one $0$-simplex, are maximal.
	\end{theorem}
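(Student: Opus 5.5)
We argue by strong induction on $|V(G)|$, using that induced subgraphs of chordal graphs are chordal, so every graph $G-N_{G}[u]$ arising below is again chordal. We strengthen the statement slightly: in the matching $\V$ we construct, every critical simplex other than one distinguished critical $0$-simplex is maximal, and every critical $0$-simplex other than the distinguished one is $\{w\}$ for a universal vertex $w$ of the graph (hence is a maximal $0$-simplex).

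\emph{Base cases.} If $G$ is empty, $\ig=\{\emptyset\}$ and the statement is vacuous. If $G$ has an isolated vertex $v$, Lemma~\ref{lmiso} gives an acyclic matching whose only critical simplex is $\{v\}$. If $G$ is complete, Observation~\ref{lmcom} says every non-empty simplex is a maximal critical $0$-simplex, so any choice of distinguished vertex works.

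\emph{Inductive step.} Otherwise, by Theorem~\ref{PEO}, $G$ has a perfect elimination ordering whose first vertex $v$ is simplicial. If $N_{G}[v]=V(G)$, then $G$ is complete (as $N_G[v]$ is a clique), which is a base case. If $N_{G}[v]=\{v\}$, then $v$ is isolated, also a base case. Hence $\{v\}\subsetneq N_{G}[v]\subsetneq V(G)$, so the hypotheses of Theorem~\ref{mth} hold. For each $u\in N_{G}(v)$ the graph $G-N_{G}[u]$ is chordal with fewer vertices, so by induction $\igu$ carries a matching $\vu$ with the strengthened property. Apply the construction in the proof of Theorem~\ref{mth}, choosing $x_{u}$ to be the distinguished critical $0$-simplex of $\vu$ whenever $G-N_{G}[u]\neq\emptyset$.

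\emph{Checking maximality from the characterisation in that proof.}
\begin{enumerate}[label=(\alph*)]
\item The critical $0$-simplices are $\{v\}$, which we make the distinguished one, and $\{u\}$ for universal vertices $u$, which are maximal.
\item The critical $1$-simplices are $\{w,u\}$ with $\{w\}$ a $\vu$-critical $0$-simplex and $w\neq x_{u}$. By the strengthened induction hypothesis, $w$ is then universal in $G-N_{G}[u]$, so $\{w\}$ is maximal in $\igu$. Any independent set containing $u$ has the form $\alpha\cup\{u\}$ with $\alpha\in\igu$, so $\{w,u\}$ is maximal in $\ig$.
\item The critical simplices of dimension $\geq 2$ are $\alpha\sqcup\{u\}$ with $\alpha$ a $\vu$-critical simplex of dimension $\geq 1$. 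Such an $\alpha$ is maximal in $\igu$ by induction, and the same observation shows $\alpha\sqcup\{u\}$ is maximal in $\ig$.
\end{enumerate}
Hence the strengthened property holds for $G$, which proves Theorem~\ref{chordalth}.

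The main obstacle is step (b). The unstrengthened hypothesis (``all but possibly one critical simplex are maximal'') does not by itself determine which critical $0$-simplex of $\igu$ is the non-maximal one. We therefore need the freedom to choose $x_{u}$ to be exactly that exceptional simplex, and this is why the induction carries the stronger statement that the non-maximal critical $0$-simplex is a specified, distinguished one.
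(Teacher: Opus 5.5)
Your proof is correct and follows the paper's argument: induction on $|V(G)|$ with the isolated-vertex and complete-graph base cases, the first vertex of a perfect elimination ordering as the simplicial vertex in Theorem~\ref{mth}, and $x_{u}$ chosen as the exceptional critical $0$-simplex of $\vu$. Your strengthened hypothesis is harmless but not needed, since maximal $0$-simplices are exactly those of universal vertices. If the exceptional simplex is non-maximal it is identified by that property, and the paper simply sets $x_{u}$ to be it when it exists; if every critical $0$-simplex is maximal, any choice of $x_{u}$ works.
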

	\begin{proof}
		We use induction on $|V(G)|$. Consider $|V(G)|=1$. Then, the only possible graph is $K_{1}$, the complete graph with $1$ vertex. Thus, with respect to any acyclic matching $\V$ on $\ii({K_{1}})$, there is only one critical $0$-simplex. 
		
		Now consider a chordal graph $G$ with vertex set $V(G)$ ($ |V(G)| \geq 2$). If $G$ contains isolated vertices, then from Lemma~\ref{lmiso}, there exists an acyclic matching on $\ig$ such that there is a unique critical simplex (of dimension $0$). On the other hand, if $G$ is complete, then from Observation~\ref{lmcom}, for any acyclic matching on $\ig$, each non-empty simplex is critical, maximal, and of dimension $0$.
		
		Let $G$ be a non-complete graph without any isolated vertices. Assume that the statement holds for all chordal graphs $H$ with $|V(H)| < |V(G)|$. We note that the first vertex, say $v$, in a perfect elimination ordering of $G$, is a simplicial and non-universal vertex. Let $u \in N_{G}(v)$ and thus $G - N_{G}[u]$ is also a chordal graph. By the induction hypothesis, there exists an acyclic matching $\vu$ on $\igu$ such that all the $\vu$-critical simplices are maximal, except possibly one $0$-simplex. 
		
		We use all such $\vu$ to construct an acyclic matching $\V$ on $\ig$, following the proof of Theorem~\ref{mth}. If there is a \emph{non-maximal} $\vu$-critical $0$-simplex, say $\{x\}$, then we set $x_{u} = x$ (i.e., $\{u\}$ is paired with $\{x,u\}$ in $\V$). We have the following.
		\begin{enumerate}[label=(\roman*)]
			\item $\{v\}$ is a critical $0$-simplex, and all other critical $0$-simplices correspond to universal vertices of $G$.
			\item $\V$-critical simplices, of dimension $\geq 1$, are of the form $\alpha \sqcup \{u\}$, for each $\vu$-critical simplex $\alpha \in \igu$, other than the possible non-maximal $0$-simplex. 
		\end{enumerate}
		We note that all $\V$-critical simplices other than $\{v\}$ are maximal.
	\end{proof}
	The following result, describing the homotopy type of the independence complex of a chordal graph, follows from Corollary~\ref{ind com sph with domination no} and Theorem~\ref{chordalth}. However, a procedure based on homotopy-theoretic techniques for determining its homotopy type appears in \cite{kazuhiro}. 
	\begin{corollary}
		Let $G$ be a chordal graph. Then, either $\ig$ is collapsible or $\ig$ is homotopy equivalent to a wedge of spheres (of varying dimensions), such that the dimension of each sphere is at least $ \gamma(G)-1$.
	\end{corollary}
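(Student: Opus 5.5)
This corollary is an immediate combination of Theorem~\ref{chordalth} with Corollary~\ref{ind com sph with domination no}. I would take a chordal graph $G$ and let $\V$ be the acyclic matching on $\ig$ supplied by Theorem~\ref{chordalth}. In $\V$, every critical simplex except possibly one $0$-simplex is maximal. This is exactly the hypothesis of Theorem~\ref{sphmaxcrit}, and hence of Corollary~\ref{ind com sph with domination no}, applied with $\mathcal{X}=\ig$. Theorem~\ref{sphmaxcrit} then gives two cases. Either $\ig$ is collapsible, or
\[\ig \simeq \bigvee_{f_{0}^{\V}-1}\mathbb{S}^{0}\bigvee_{f_{1}^{\V}} \mathbb{S}^{1} \cdots \bigvee_{f_{d}^{\V}} \mathbb{S}^{d},\]
where $d=\dim(\ig)$.

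Next I would verify the dimension bound on the spheres, which I expect to be the only real content. A sphere of dimension $t\ge 1$ in the wedge decomposition corresponds to a critical $t$-simplex. Since $t \geq 1$, this simplex is not the exceptional $0$-simplex, so it is maximal. A maximal simplex of $\ig$ is a maximal independent set, hence a dominating set, so it has cardinality at least $\gamma(G)$. Its dimension $t$ is therefore at least $\gamma(G)-1$.

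The $\mathbb{S}^{0}$ summands need a separate check. When $f_0^{\V}\ge 2$, there is a maximal critical $0$-simplex $\{u\}$, and $u$ is a universal vertex. In that case $\gamma(G)=1$, so the required bound $\gamma(G)-1=0$ holds trivially.

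The one point I would treat carefully is the degenerate split in Theorem~\ref{sphmaxcrit}. Its proof assumes $\emptyset$ is unpaired when critical simplices of positive dimension exist. If every critical simplex is a $0$-simplex and there is only one, $\ig$ is collapsible. If there are several, the wedge of $\mathbb{S}^0$'s case applies, and the bound follows as in the previous paragraph. This completes the argument.
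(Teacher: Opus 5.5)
Your proposal is correct and follows the paper's route: the paper obtains this corollary directly by combining Theorem~\ref{chordalth} with Corollary~\ref{ind com sph with domination no}. Your separate check for the $\mathbb{S}^{0}$ summands makes explicit a detail the paper leaves implicit: when $f_0^{\V}\geq 2$, some critical $0$-simplex is maximal, so $G$ has a universal vertex and $\gamma(G)=1$.
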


	\subsection{Independence complex of comparability graphs of grid posets}\label{subsection comparabilty graph}
	
	As mentioned previously, instead of the comparability graph of the $m \times n$ grid poset, we consider a more general family $\g_{m,n}$. For $m,n \in \mathbb{N} \cup \{0\}$, $\g_{m,n}$ is the class of graphs with the vertex set $V = \bigsqcup \limits_{i=0}^{m} \bigsqcup \limits_{j=0}^{n} V_{i,j}$, where each $V_{i,j} \neq \emptyset$, and a pair of vertices $x \in V_{i_{1},j_{1}}$ and $y \in V_{i_{2},j_{2}}$ are adjacent if 
	\begin{enumerate}[label=(\roman*)]
		\item $i_{1} \leq i_{2}$ and $j_{1} \leq j_{2}$, or
		
		\item $i_{1} \geq i_{2}$ and $j_{1} \geq j_{2}$ (see Figure~\ref{fig griddefn}).
	\end{enumerate}

	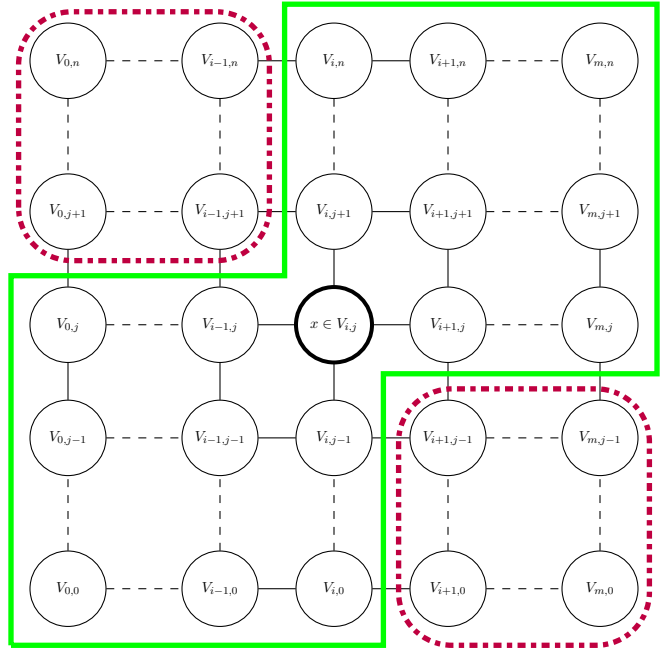
\begin{figure}[htbp]
		\begin{center}
			
			\begin{tikzpicture}[transform shape, scale = 0.5]

				\node[circle, draw, minimum size=20mm] (a) at (0,0) {$V_{0,0}$};
				\node[circle, draw, minimum size=20mm] (b) at (4,0) {$V_{i-1,0}$};
				\node[draw, circle, minimum size=20mm] 
				(c) at (7,0) {$ V_{i,0}$};
				
				\node[circle, draw, minimum size=20mm] (d) at (10,0) {$V_{i+1,0}$};
				\node[circle, draw, minimum size=20mm] (e) at (14,0) {$V_{m,0}$};

				\node[circle, draw, minimum size=20mm] (A) at (0,4) {$V_{0,j-1}$};
				\node[circle, draw, minimum size=20mm] (B) at (4,4) {$ V_{i-1,j-1}$};
				\node[circle, draw, minimum size=20mm] (C) at (7,4) {$V_{i,j-1}$};
				
				\node[circle, draw, minimum size=20mm] (D) at (10,4) {$ V_{i+1,j-1}$};
				\node[circle, draw, minimum size=20mm] (E) at (14,4) {$V_{m,j-1}$};

				\node[circle, draw, minimum size=20mm] (X) at (0,7) {$V_{0,j}$};
				\node[circle, draw, minimum size=20mm] (Y) at (4,7) {$V_{i-1,j}$};
				\node[circle, draw, minimum size=20mm, line width=1.5pt] (Z) at (7,7) {$x \in V_{i,j}$};
				
				\node[circle, draw, minimum size=20mm] (R) at (10,7) {$V_{i+1,j}$};
				\node[circle, draw, minimum size=20mm] (S) at (14,7) {$V_{m,j}$};

				\node[circle, draw, minimum size=20mm] (k) at (0,10) {$V_{0,j+1}$};
				
				\node[circle, draw, minimum size=20mm] (l) at (4,10) {$V_{i-1,j+1}$};
				\node[circle, draw, minimum size=20mm] (m) at (7,10) {$V_{i,j+1}$};
				\node[circle, draw, minimum size=20mm] (n) at (10,10) {$V_{i+1,j+1}$};
				\node[circle, draw, minimum size=20mm] (o) at (14,10) {$V_{m,j+1}$};

				\node[circle, draw, minimum size=20mm] (G) at (0,14) {$V_{0,n}$};
				
				\node[circle, draw, minimum size=20mm] (H) at (4,14) {$V_{i-1,n}$};
				\node[circle, draw, minimum size=20mm] (I) at (7,14) {$V_{i,n}$};
				
				\node[circle, draw, minimum size=20mm] (J) at (10,14) {$V_{i+1,n}$};
				\node[circle, draw, minimum size=20mm] (K) at (14,14) {$V_{m,n}$};

				
				\draw[dashed] (a) -- (b);
				\draw (b) -- (c);
				\draw (c) -- (d);
				\draw[dashed] (d) -- (e);

				\draw[dashed] (A) -- (B);
				\draw (B) -- (C);
				\draw (C) -- (D);
				\draw[dashed] (D) -- (E);

				\draw[dashed] (X) -- (Y);
				\draw (Y) -- (Z);
				\draw (Z) -- (R);
				\draw[dashed] (R) -- (S);

				\draw[dashed] (k) -- (l);
				\draw (l) -- (m);
				\draw (m) -- (n);
				\draw[dashed] (n) -- (o);
				
				
				\draw[dashed] (G) -- (H);
				\draw (H) -- (I);
				\draw (I) -- (J);
				\draw[dashed] (J) -- (K);

				\draw[dashed] (a) -- (A);
				\draw[dashed] (b) -- (B);
				\draw[dashed] (c) -- (C);
				\draw[dashed] (d) -- (D);
				\draw[dashed] (e) -- (E);

				\draw (A) -- (X);
				\draw (B) -- (Y);
				\draw (C) -- (Z);
				\draw (D) -- (R);
				\draw (E) -- (S);

				\draw (X) -- (k);
				\draw (Y) -- (l);
				\draw (Z) -- (m);
				\draw (R) -- (n);
				\draw (S) -- (o);
				
				\draw[dashed] (G) -- (k);
				\draw[dashed] (H) -- (l);
				\draw[dashed] (I) -- (m);
				\draw[dashed] (J) -- (n);
				\draw[dashed] (K) -- (o);

				\draw[color=green, line width=2pt] (-1.5, -1.5) -- (-1.5,8.3) --(5.7,8.3) -- (5.7, 15.5) -- (15.5,15.5) -- (15.5,5.7) -- (8.3,5.7) -- (8.3,-1.5) -- (-1.5, -1.5);
				
				\draw[line width=2pt, color=purple, rounded corners=20pt, dashdotted] (-1.3,8.7) rectangle (5.3,15.3);
				\draw[line width=2pt, color=purple, rounded corners=20pt, dashdotted] (8.7,-1.5) rectangle (15.3,5.3);

			\end{tikzpicture} 
		\end{center}
		\caption{A graph $G \in \g_{m,n}$ is represented by this grid diagram, where two vertices are adjacent if and only if they are reachable via a (north-east) lattice path. For instance, the region bounded by the solid line segments represents $N_{G}[x]$, for $x \in V_{i,j}$, and the dotted rectangles contain the non-neighbours of $ x$.} \label{fig griddefn}
	\end{figure}
	
	\begin{observation}
		Let $G \in \g_{m,n}$. Then, the following hold.
		\begin{enumerate}
			\item For all $i,j$, the set $V_{i,j}$ is a clique.
			\item All the vertices in $V_{0,0}$ and $V_{m,n}$ are universal vertices in $G$. Moreover, if $m\geq 1$ and $n \geq 1$, then there are no universal vertices other than vertices in $V_{0,0} \cup V_{m,n}$.  
			
		\end{enumerate}
	\end{observation}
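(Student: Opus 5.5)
Both parts follow directly from the definition of adjacency in $\g_{m,n}$: two vertices are adjacent exactly when their index pairs are comparable in the product order on $\{0,\ldots,m\}\times\{0,\ldots,n\}$.

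For part 1, take $x,y \in V_{i,j}$ with $x\neq y$. Both index pairs equal $(i,j)$, so $i \le i$ and $j \le j$ hold and condition (i) makes $x$ and $y$ adjacent. Hence $V_{i,j}$ is a clique.

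For the first claim of part 2, let $x \in V_{0,0}$ and let $y \in V_{i_2,j_2}$ be any other vertex. Since $0\le i_2$ and $0\le j_2$, condition (i) applies and $x$ is adjacent to $y$. The case $x\in V_{m,n}$ is symmetric: $m \ge i_2$ and $n\ge j_2$, so condition (ii) applies. Thus every vertex of $V_{0,0}\cup V_{m,n}$ is universal.

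For the second claim of part 2, assume $m,n\ge 1$ and let $x\in V_{i,j}$ with $(i,j)\notin\{(0,0),(m,n)\}$. We find an incomparable index pair and split into two cases.

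\textbf{Case $i\ge 1$.} If $j<n$, pick $y\in V_{0,n}$, which is nonempty. Then $0<i$ but $n>j$, so the pairs $(i,j)$ and $(0,n)$ are incomparable and $x\not\sim y$. If $j=n$, then $i<m$ because $(i,j)\neq(m,n)$, and $j=n\ge1$. Pick $y\in V_{m,0}$: here $m>i$ but $0<n$, so again the pairs are incomparable and $x\not\sim y$.

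\textbf{Case $i=0$.} Then $j\ge1$ because $(i,j)\neq(0,0)$. Pick $y\in V_{m,0}$: here $m>0$ but $0<j$, so the pairs are incomparable and $x\not\sim y$.

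In every case $x$ has a non-neighbour, so $x$ is not universal. The only point requiring care is this case split, which guarantees the chosen corner is strictly incomparable with $(i,j)$; it uses $m,n\ge1$ and the nonemptiness of every $V_{i,j}$.
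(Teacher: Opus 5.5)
Your proof is correct and is essentially the paper's route. The paper states this observation without proof and treats it as immediate from the definition of adjacency in $\g_{m,n}$; Figure~\ref{fig griddefn} shows the non-neighbours of $x \in V_{i,j}$ lying in the north-west and south-east blocks, and your case split simply picks a nonempty corner class from one of those blocks.
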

	
	\begin{theorem}\label{gridth}
		For any $G \in \g_{m,n}$, there is an acyclic matching $\V$ on $\ig$ such that all $\V$-critical simplices, except possibly one $0$-simplex, are maximal. As a consequence, \normalfont{(if $|V(G)|\geq 2$)}
		\[\ii(G) \simeq \bigvee_{f_{0}^{\V}-1} \mathbb{S}^{0} \bigvee_{f_{1}^{\V}} \mathbb{S}^{1}
		\bigvee_{f_{2}^{\V}} \mathbb{S}^{2}
		\cdots \bigvee_{f_{d}^{\V}} \mathbb{S}^{d},\]
		where $d=\dim(\ig)$ ($=\min\{m,n\}$).
		
	\end{theorem}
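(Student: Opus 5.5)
We argue by induction on $|V(G)|$ (equivalently on $m+n$ together with the clique sizes), mirroring the proof of Theorem~\ref{chordalth}. The key structural input is that $\g_{m,n}$ is closed, up to relabelling, under the deletions $G - N_{G}[u]$ needed in Theorem~\ref{mth}, and that a suitable simplicial vertex always exists.

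\textbf{Base cases.} If $m=0$ or $n=0$, any two vertices lie in comparable blocks, so $G$ is complete. Observation~\ref{lmcom} then gives the claim: every nonempty simplex is a maximal critical $0$-simplex.

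\textbf{Inductive step.} Assume $m,n\geq 1$. I choose $v \in V_{m,0}$ (the bottom-right corner). Its neighbours are exactly the vertices of blocks $V_{i,j}$ with $i\geq m$ and $j\geq 0$, or $i\leq m$ and $j\leq 0$; that is, the column $i=m$ and the row $j=0$. The vertices in $V_{i,0}\cup V_{m,j}$ are pairwise comparable: for $x \in V_{i,0}$ and $y\in V_{m,j}$ we have $i\leq m$ and $0\leq j$. Hence $N_{G}[v]$ is a clique and $v$ is simplicial. Since $V_{0,n}$ is not adjacent to $v$, we have $N_{G}[v]\neq V(G)$. If $v$ has no other neighbour, then $v$ is isolated, which is impossible here because $V_{0,0}$ is adjacent to everything.

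Next, for $u \in N_{G}(v)$, I compute $G-N_{G}[u]$. Take $u\in V_{i,0}$ with $i<m$. Its non-neighbours are the blocks $V_{i',j'}$ with $i'<i$ and $j'>0$; together with the ``above-left'' region, and since $j'>0$ there are no blocks with $j'<0$. So $G-N_{G}[u]$ is the induced graph on $\{(i',j'): i'<i,\ j'\geq 1\}$. After shifting $j'\mapsto j'-1$, this is a member of $\g_{i-1,n-1}$, or empty when $i=0$. Symmetrically, $u \in V_{m,j}$ gives a member of $\g_{m-1,j-1}$-type shape $\{i'\geq 1... \}$, namely $\{(i',j'): i'>m\}$, which is empty. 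More carefully, $u\in V_{m,j}$ with $j>0$ has non-neighbours exactly the blocks with $i'<m$ and $j'>j$, which is again a grid-type graph in $\g_{m-1,n-j-1}$. Finally, $u\in V_{m,0}\setminus\{v\}$ gives $G-N_{G}[u]$ equal to the blocks $i'<m$, $j'>0$, which lies in $\g_{m-1,n-1}$. In every case the result is a smaller graph in some $\g_{a,b}$, or empty. I need to check carefully that the induced adjacency agrees with the $\g$-rule; it does, because the rule depends only on coordinate comparisons, which are preserved by shifting.

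By induction, each nonempty $\igu$ carries an acyclic matching $\vu$ all of whose critical simplices are maximal except possibly one $0$-simplex. I then build $\V$ exactly as in the proof of Theorem~\ref{chordalth}. Whenever a non-maximal critical $0$-simplex $\{x\}$ exists, I set $x_{u}=x$. Using the characterisation of critical simplices from the proof of Theorem~\ref{mth}, the critical simplices of $\V$ are:
\begin{enumerate}[label=(\roman*)]
\item $\{v\}$;
\item the $0$-simplices of universal vertices, which are maximal;
\item the sets $\alpha\sqcup\{u\}$, where $\alpha$ is a maximal $\vu$-critical simplex.
\end{enumerate}
Each set $\alpha\sqcup\{u\}$ is maximal in $\ig$, because $\alpha$ is maximal in $\igu$ and every vertex outside $N_G[u]$ is either in $\alpha$ or adjacent to it. The case $G-N_G[u]=\emptyset$ contributes only universal vertices.

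The homotopy statement then follows from Theorem~\ref{sphmaxcrit}. When $|V(G)|\geq 2$, $\ig$ is not collapsible in the degenerate case; in any case, the wedge formula with the appropriate counts still holds. For the dimension, an independent set uses blocks forming an antichain in the product order, so its size is at most $\min\{m,n\}+1$. Equality is attained by blocks on an anti-diagonal, so $\dim\ig=\min\{m,n\}$.

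The main obstacle I expect is the bookkeeping in the inductive step: identifying $G-N_{G}[u]$ precisely as a member of some $\g_{a,b}$ for every type of neighbour $u$, including the boundary cases where it is empty or complete.
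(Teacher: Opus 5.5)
Your proposal is correct and follows the paper's proof essentially step for step. You use the simplicial non-universal vertex $v\in V_{m,0}$, identify each nonempty $G-N_G[u]$ as a smaller member of $\g_{i-1,n-1}$ or $\g_{m-1,n-j-1}$, choose $x_u$ as the non-maximal critical $0$-simplex, and then appeal to Theorem~\ref{sphmaxcrit}. Your induction on $|V(G)|$ and your explicit checks that $\alpha\sqcup\{u\}$ is maximal and that $\dim\ig=\min\{m,n\}$ are small gains in rigour over the paper's version.

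One cleanup: delete the stray sentence claiming that $u\in V_{m,j}$ yields a $\g_{m-1,j-1}$-type graph on $\{(i',j'):i'>m\}$. The correct description is the one you give immediately afterwards.
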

	\begin{proof}
		We use induction on $m$ and $n$. Let $G \in \g_{m',0}$, where $0 \leq m' \leq m$. Then $G$ is a complete graph with $V(G)$ $=\bigsqcup \limits_{i=0}^{m'} V_{i,0}$. Thus, all non-empty simplices of $\ig$ are of dimension $0$. Therefore, it follows from Observation~\ref{lmcom}, with respect to any acyclic matching $\V$ on $\ig$, each non-empty simplex is critical and maximal.
		
		Similarly, $G \in \g_{0,n'}$ ($1 \leq n' \leq n$) is also a complete graph. Thus, with respect to any acyclic matching $\V$ on $\ig$, all non-empty simplices are critical, maximal, and $0$-dimensional.
		
		Let $m \geq 1$ and $n \geq 1$, and let the statement hold for any graph in $\g_{m',n'}$, for $m' < m$ and $ n' < n$. 
		We observe that for $G \in \g_{m,n}$, any $v \in V_{m,0}$ (or, $v \in V_{0,n}$) is \emph{not} a universal vertex. Also, $N_{G}[v]=(\bigsqcup \limits_{i=0}^{m}V_{i,0}) \cup (\bigsqcup \limits_{j=1}^{n} V_{m,j})$ is a clique.
		
		If $u \in N(v)$ is \emph{not} a universal vertex, then either $u \in V_{i,0}$, with $1 \leq i \leq m $, or $u \in V_{m,j}$, with $1 \leq j \leq n-1$.
		If $u \in V_{i,0}$  with $1 \leq i \leq m $ (or, $u \in V_{m,j}$ with $1 \leq j \leq n-1$), then $G - N_{G}[u]$ can be realised as a member of $\g_{i-1,n-1}$ (or, $\g_{m-1,n-j-1}$) (see Figure~\ref{fig g-nu graph}).
		
		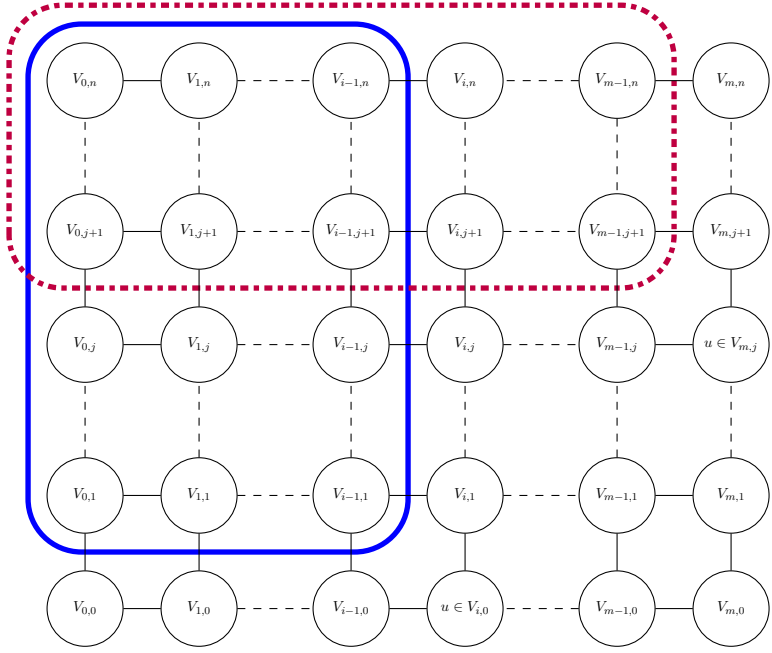
\begin{figure}[htbp]
			\begin{center}
				
				\begin{tikzpicture}[transform shape, scale = 0.5]
					
					\node[circle, draw, minimum size=20mm] (a) at (0,0) {$V_{0,0}$};
					\node[circle, draw, minimum size=20mm] (b) at (3,0) {$V_{1,0}$};
					\node[circle, draw, minimum size=20mm] (c) at (7,0) {$V_{i-1,0}$};
					\node[draw, circle, minimum size=20mm] 
					(p) at (10,0) {$u \in V_{i,0}$};
					
					\node[draw, circle, minimum size=20mm] (d) at (14,0) {$V_{m-1,0}$};
					
					\node[circle, draw, minimum size=20mm] (e) at (17,0) {$V_{m,0}$};
					
					\node[circle, draw, minimum size=20mm] (f) at (0,3) {$V_{0,1}$};
					\node[circle, draw, minimum size=20mm] (g) at (3,3) {$V_{1,1}$};
					\node[circle, draw, minimum size=20mm] (h) at (7,3) {$V_{i- 1,1}$};
					\node[circle, draw, minimum size=20mm] (q) at (10,3) {$V_{i,1}$};
					
					\node[circle, draw, minimum size=20mm] (i) at (14,3) {$V_{m-1,1}$};
					\node[circle, draw, minimum size=20mm] (j) at (17,3) {$V_{m,1}$};

					\node[circle, draw, minimum size=20mm] (t) at (0,7) {$V_{0,j}$};
					\node[circle, draw, minimum size=20mm] (u) at (3,7) {$V_{1,j}$};
					\node[circle, draw, minimum size=20mm] (v) at (7,7) {$V_{i-1,j}$};
					\node[circle, draw, minimum size=20mm] (w) at (10,7) {$V_{i,j}$};
					
					\node[circle, draw, minimum size=20mm] (x) at (14,7) {$V_{m-1,j}$};
					\node[circle, draw, minimum size=20mm] (y) at (17,7) {$u \in V_{m,j}$};
					
					\node[circle, draw, minimum size=20mm] (A) at (0,10) {$V_{0,j+1}$};
					\node[circle, draw, minimum size=20mm] (B) at (3,10) {$V_{1,j+1}$};
					\node[circle, draw, minimum size=20mm] (C) at (7,10) {$V_{i-1,j+1}$};
					\node[circle, draw, minimum size=20mm] (D) at (10,10) {$V_{i,j+1}$};
					\node[circle, draw, minimum size=20mm] (F) at (14,10) {$V_{m-1,j+1}$};
					\node[circle, draw, minimum size=20mm] (E) at (17,10) {$V_{m,j+1}$};
					
					\node[circle, draw, minimum size=20mm] (k) at (0,14) {$V_{0,n}$};
					\node[circle, draw, minimum size=20mm] (l) at (3,14) {$V_{1,n}$};
					\node[circle, draw, minimum size=20mm] (m) at (7,14) {$V_{i-1,n}$};
					\node[circle, draw, minimum size=20mm] (r) at (10,14) {$V_{i,n}$};
					\node[circle, draw, minimum size=20mm] (n) at (14,14) {$V_{m-1,n}$};
					\node[circle, draw, minimum size=20mm] (o) at (17,14) {$V_{m,n}$};
					
					\draw[line width=2pt, color=blue, rounded corners=20pt] (-1.5,1.5) rectangle (8.5,15.5);
					\draw[line width=2pt, color=purple, dashdotted , rounded corners=20pt] (-2,8.5) rectangle (15.5,16);
					
					\draw (a) -- (b);
					\draw[dashed] (b) -- (c);
					\draw (c) -- (p);
					\draw[dashed] (d) -- (p);
					\draw (e) -- (d);
					
					\draw (a) -- (f);
					\draw (b) -- (g);
					\draw (c) -- (h);
					\draw (d) -- (i);
					\draw (e) -- (j);
					\draw(p) -- (q);

					\draw (f) -- (g);
					\draw[dashed] (g) -- (h);
					\draw (h) -- (q);
					
					\draw[dashed] (q) -- (i);
					\draw (i) -- (j);
					
					\draw (t) -- (u);
					\draw[dashed] (u) -- (v);
					\draw (v) -- (w);
					\draw[dashed] (w) -- (x);
					\draw (x) -- (y);
					
					\draw (A) -- (B);
					\draw[dashed] (B) -- (C);
					\draw (C) -- (D);
					\draw[dashed] (D) -- (F);
					\draw (F) -- (E);

					\draw (k) -- (l);
					\draw[dashed] (l) -- (m);
					\draw (m) -- (r);
					\draw[dashed] (n) -- (r);
					
					\draw (n) -- (o);
					
					\draw[dashed] (f) -- (t);
					\draw[dashed] (g) -- (u);
					\draw[dashed] (h) -- (v);
					\draw[dashed] (q) -- (w);
					\draw[dashed] (i) -- (x);
					\draw[dashed] (j) -- (y);
					
					\draw (t) -- (A);
					\draw (u) -- (B);
					\draw (v) -- (C);
					\draw (w) -- (D);
					\draw (F) -- (x);
					\draw (y) -- (E);
					
					\draw[dashed] (A) -- (k);
					\draw[dashed] (B) -- (l);
					\draw[dashed] (C) -- (m);
					\draw[dashed] (D) -- (r);
					\draw[dashed] (n) -- (F);
					\draw[dashed] (E) -- (o);
					
				\end{tikzpicture} 
			\end{center}
			\caption{For $u \in V_{i,0}$, the solid rectangle represents the graph $G - N_{G}[u] \in \g_{i-1,n-1}$ with the vertex set $\bigsqcup \limits_{r=0}^{i-1} \bigsqcup \limits_{s=0}^{n-1}V'_{r,s}$, where $V'_{r,s}=V_{r,s+1}$, and for $u \in V_{m,j}$, the dotted rectangle represents the graph $G - N_{G}[u] \in \g_{m-1,n-j-1}$ with the vertex set$\bigsqcup \limits_{r=0}^{m-1} \bigsqcup \limits_{s=0}^{n-j-1}V'_{r,s}$, where $V'_{r,s}=V_{r,s+j+1}$.}\label{fig g-nu graph}
		\end{figure}
		
		By the induction hypothesis, there is an acyclic matching $\vu$ on $\igu$ with all the $\vu$-critical simplices being maximal, except possibly one $0$-simplex. 
		
		We construct an acyclic matching $\V$ on $\ig$ by using all such $\vu$, following the proof of Theorem~\ref{mth}. If a \emph{non-maximal} $\vu$-critical $0$-simplex exists, say $\{x\}$, we set $x_{u} = x$ (i.e., ($\{u\},\{x,u\}$)$ \in \V$). Thus,
		\begin{enumerate}[label=(\roman*)]
			\item $\{v\}$ is $\V$-critical, and all other $\V$-critical $0$-simplices correspond to universal vertices of $G$.
			\item $\V$-critical simplices, of dimension $\geq 1$, are of the form $\alpha \sqcup \{u\}$, for each $\vu$-critical maximal (in $\igu$) simplex $\alpha$.
		\end{enumerate}
		We note that all $\V$-critical simplices other than $\{v\}$ are maximal.
		
		Moreover, from Theorem~\ref{sphmaxcrit}, it follows that for an acyclic matching, constructed above, $\ig$ is homotopy equivalent to the wedge of ($f_{d}^{\V}-\delta_{d,0}$) spheres of dimension $d$ (i.e., $\mathbb{S}^{d}$), where $d\geq 0$.
	\end{proof}

	Given the full recursive construction of an acyclic matching $\V$ on $\ig$ established in the proof of Theorem~\ref{gridth}, a complete formula for $f_{d}^{\V}(\ig)$ follows naturally by iteratively applying the single-step recursion from Theorem~\ref{mth}. The subsequent proposition formalises this result, providing an explicit recursion formula for the critical $f$-vector. We note that an analogous formula for the numbers of spheres of each dimension for the independence complex of the power graph of the cyclic group of order $p^mq^n$, for distinct primes $p$ and $q$, is obtained in \cite{nn}.

	Let $G \in \g_{m,n}$ and $\dim(\ig)=d$ ($=\min\{m,n\}$). Let $\V$ be an acyclic matching on $\ig$, as constructed recursively in the proof of Theorem~\ref{gridth}.
	For $i \in \{0,1, \ldots,m-1\}$, $j \in \{1,2, \ldots,n\}$, let $G_{i,j}$ be the  subgraph of $G$ induced by the vertex set 
	$\bigsqcup \limits_{r=0}^{i} \bigsqcup \limits_{s=j}^{n}V_{r,s}$. Then $G_{i,j} \in \g_{i, n-j}$, with $V(G
	_{i,j})=\bigsqcup \limits_{r=0}^{i} \bigsqcup \limits_{s=0}^{n-j}V'_{r,s}$, where $V'_{r,s}=V_{r,s+j}$ (see Figure~\ref{fig g_ij graph}). 
	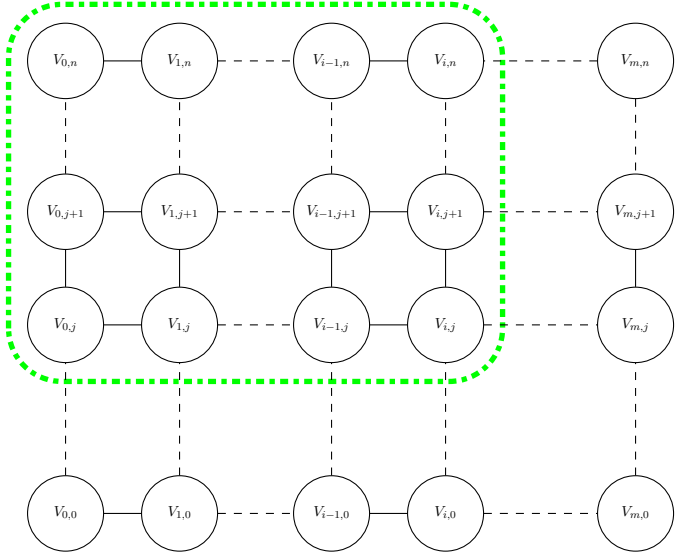
\begin{figure}[htbp]
		\begin{center}
			
			\begin{tikzpicture}[transform shape, scale = 0.5]

				\node[circle, draw, minimum size=20mm] (f) at (0,1) {$V_{0,0}$};
				\node[circle, draw, minimum size=20mm] (g) at (3,1) {$V_{1,0}$};
				\node[circle, draw, minimum size=20mm] (h) at (7,1) {$V_{i- 1,0}$};
				\node[circle, draw, minimum size=20mm] (q) at (10,1) {$V_{i,0}$};
				
				\node[circle, draw, minimum size=20mm] (i) at (15,1) {$V_{m,0}$};

				\node[circle, draw, minimum size=20mm] (t) at (0,6) {$V_{0,j}$};
				\node[circle, draw, minimum size=20mm] (u) at (3,6) {$V_{1,j}$};
				\node[circle, draw, minimum size=20mm] (v) at (7,6) {$V_{i-1,j}$};
				\node[circle, draw, minimum size=20mm] (w) at (10,6) {$V_{i,j}$};
				
				\node[circle, draw, minimum size=20mm] (x) at (15,6) {$V_{m,j}$};
				
				\node[circle, draw, minimum size=20mm] (A) at (0,9) {$V_{0,j+1}$};
				\node[circle, draw, minimum size=20mm] (B) at (3,9) {$V_{1,j+1}$};
				\node[circle, draw, minimum size=20mm] (C) at (7,9) {$V_{i-1,j+1}$};
				\node[circle, draw, minimum size=20mm] (D) at (10,9) {$V_{i,j+1}$};
				\node[circle, draw, minimum size=20mm] (F) at (15,9) {$V_{m,j+1}$};
				
				\node[circle, draw, minimum size=20mm] (k) at (0,13) {$V_{0,n}$};
				\node[circle, draw, minimum size=20mm] (l) at (3,13) {$V_{1,n}$};
				\node[circle, draw, minimum size=20mm] (m) at (7,13) {$V_{i-1,n}$};
				\node[circle, draw, minimum size=20mm] (r) at (10,13) {$V_{i,n}$};
				\node[circle, draw, minimum size=20mm] (n) at (15,13) {$V_{m,n}$};
				
				\draw[line width=2pt, color=green, rounded corners=20pt, dashdotted] (-1.5,4.5) rectangle (11.5,14.5);
				
				\draw (f) -- (g);
				\draw[dashed] (g) -- (h);
				\draw (h) -- (q);
				
				\draw[dashed] (q) -- (i);

				\draw (t) -- (u);
				\draw[dashed] (u) -- (v);
				\draw (v) -- (w);
				\draw[dashed] (w) -- (x);

				\draw (A) -- (B);
				\draw[dashed] (B) -- (C);
				\draw (C) -- (D);
				\draw[dashed] (D) -- (F);

				\draw (k) -- (l);
				\draw[dashed] (l) -- (m);
				\draw (m) -- (r);
				\draw[dashed] (n) -- (r);

				\draw[dashed] (f) -- (t);
				\draw[dashed] (g) -- (u);
				\draw[dashed] (h) -- (v);
				\draw[dashed] (q) -- (w);
				\draw[dashed] (i) -- (x);

				\draw (t) -- (A);
				\draw (u) -- (B);
				\draw (v) -- (C);
				\draw (w) -- (D);
				\draw (F) -- (x);
				
				\draw[dashed] (A) -- (k);
				\draw[dashed] (B) -- (l);
				\draw[dashed] (C) -- (m);
				\draw[dashed] (D) -- (r);
				\draw[dashed] (n) -- (F);

			\end{tikzpicture} 
		\end{center}
		\caption{For $ i \in \{0,1, \ldots, m-1\}$ and $j \in \{1, 2, \ldots, n\}$ the dotted rectangle represents the graph $G_{i,j}$.} \label{fig g_ij graph}
	\end{figure}

	Let $d_{i,j}:=\dim(\ii(G_{i,j}))= \min \{i,n-j\}$.  For $\ell \in \{0,1, \ldots,d_{i,j}\}$, let $c_{i,j}^{(\ell)}$ denote the number of $\ell$-dimensional critical simplices of $\ii(G_{i,j})$ with respect to an acyclic matching $\V'$, constructed recursively in the proof of Theorem~\ref{gridth}. In other words, $c_{i,j}^{(\ell)}=f_{\ell}^{\V'}(\ii(G_{i,j}))$. We provide a recursion formula for $c_{i,j}^{(\ell)}$ and the critical $f$-vector of $\ig$ (with respect to $\V$) as follows. 
	
	\begin{proposition}\label{gridcountth}
		For $i \in \{0,1, \ldots,m-1\}$, $j \in \{1,2, \ldots,n\}$, $c_{i,j}^{(\ell)}$ satisfies the following recurrence relation.
		\begin{align*}
			c_{i,j}^{(0)}&= \begin{cases}
				\sum \limits_{s=j}^{n}|V_{0,s}|, & \text{if} \ i=0,\\
				\sum \limits_{r=0}^{i}|V_{r,n}|, & \text{if} \ j=n, \\
				1+|V_{0,j}|+|V_{i,n}|, & otherwise,
			\end{cases}
		\end{align*}
		and for $ \ell \in \{1, 2, \ldots, d_{i,j}\},$\\
		\begin{align*}
			c_{i,j}^{(\ell)}&=\begin{cases}\sum \limits_{r=\ell}^{i}(|V_{r,j}|-\delta_{r,i}) \cdot  \big(c_{r-1,j+1}^{(\ell-1)}-\delta_{\ell-1,0}\big) \\
				\hspace{1cm} +\sum \limits_{s=j+1}^{n-\ell}|V_{i,s}| \cdot \big(c_{i-1,s+1}^{(\ell-1)}-\delta_{\ell-1,0}\big),& \text{when } \ell < n-j,\\
				\sum \limits_{r=\ell}^{i}(|V_{r,j}|-\delta_{r,i}) \cdot  \big(c_{r-1,j+1}^{(\ell-1)}-\delta_{\ell-1,0}\big), & \text{when } \ell = n-j,
			\end{cases}
		\end{align*}
		where $\delta_{i,j}$ is the Kronecker delta function.\\
		Moreover, 
		\begin{align*}
			f_{0}^{\V}(\ig)&=\begin{cases}
				\sum \limits_{s=0}^{n}|V_{0,s}|, & \text{if} \ m=0,\\
				\sum \limits_{r=0}^{m}|V_{r,0}|, & \text{if} \ n=0, \\
				1 +|V_{0,0}|+|V_{m,n}|,& otherwise,
			\end{cases}
		\end{align*}
		and for $t \in \{1,2, \ldots,  d-1\}$,
		\begin{align*}
			f_{t}^{\V}(\ig)&=\sum \limits_{r=t}^{m}(|V_{r,0}|-\delta_{r,m}) \cdot (c_{r-1,1}^{(t-1)}-\delta_{t-1,0}) +\sum \limits_{s=1}^{n-t}|V_{m,s}| \cdot (c_{m-1,s+1}^{(t-1)}-\delta_{t-1,0}),\\  \text{ and, }
			f_{d}^{\V}(\ig)&=\begin{cases}\sum \limits_{r=d}^{m}(|V_{r,0}|-\delta_{r,m}) \cdot(c_{r-1,1}^{(d-1)}-\delta_{d-1,0})\\
				\hspace{1cm}+\sum \limits_{s=1}^{n-t}|V_{m,s}| \cdot(c_{m-1,s+1}^{(d-1)}-\delta_{d-1,0}), & \text{ when } m<n,\\
				\sum \limits_{r=d}^{m}(|V_{r,0}|-\delta_{r,m})\cdot (c_{r-1,1}^{(d-1)}-\delta_{d-1,0}), & \text{ when } m\geq n.
			\end{cases}
		\end{align*}
	\end{proposition}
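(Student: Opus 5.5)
The plan is to apply Theorem~\ref{mth} once, to each graph $G_{i,j}$ and then to $G$ itself, with the same choices of simplicial vertex and of the pairs $\{u\},\{u,x_u\}$ as in the proof of Theorem~\ref{gridth}, and to read off the counts from the characterisation of critical simplices given there.

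\textbf{Step 1: one step of the recursion on $G_{i,j}$.} Fix $i\ge 1$ and $j\le n-1$, so that $G_{i,j}\in\g_{i,n-j}$ with $i,n-j\ge1$. Take as simplicial vertex some $v\in V'_{i,0}=V_{i,j}$, as in the proof of Theorem~\ref{gridth}. Then
\[N_{G_{i,j}}(v)=(V_{0,j}\sqcup\cdots\sqcup V_{i,j}\setminus\{v\})\sqcup V_{i,j+1}\sqcup\cdots\sqcup V_{i,n}.\]
The universal vertices of $G_{i,j}$ are exactly $V_{0,j}\cup V_{i,n}$. This gives the formula for $c^{(0)}_{i,j}$ in the "otherwise" case, namely $1+k$ from part~(i) of Theorem~\ref{mth}.

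The two boundary cases $i=0$ and $j=n$ need no recursion. There $G_{i,j}$ is complete, and Observation~\ref{lmcom} says every vertex is a critical $0$-simplex, which gives the stated sums.

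\textbf{Step 2: contributions in dimension $\ell\ge1$.} By the proof of Theorem~\ref{gridth}, every critical $\ell$-simplex has the form $\alpha\sqcup\{u\}$ with $u\in N(v)$ non-universal. Here $\alpha$ runs over the critical $(\ell-1)$-simplices of $\ii(G_{i,j}-N[u])$, excluding the non-maximal one used as $x_u$. That excluded simplex exists exactly when $\ell-1=0$, since the extra non-maximal $0$-simplex always exists in the non-complete case. This accounts for the factor $-\delta_{\ell-1,0}$.

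Next I identify $G_{i,j}-N[u]$:
\begin{itemize}
\item[$\cdot$] For $u\in V_{r,j}$ with $1\le r\le i$, it is the grid on rows $0,\dots,r-1$ and columns $j+1,\dots,n$, that is, $G_{r-1,j+1}$.
\item[$\cdot$] For $u\in V_{i,s}$ with $j+1\le s\le n-1$, it is the grid on rows $0,\dots,i-1$ and columns $s+1,\dots,n$, that is, $G_{i-1,s+1}$.
\end{itemize}
Summing over $u$ produces the factors $|V_{r,j}|$ and $|V_{i,s}|$, where $|V_{i,j}|$ is reduced by $1$ (the term $-\delta_{r,i}$) because $v$ itself is excluded.

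Range restrictions come from dimensions. The contribution $c^{(\ell-1)}_{r-1,j+1}$ is nonzero only when $\ell-1\le\min\{r-1,n-j-1\}$, which forces $r\ge\ell$. Likewise $c^{(\ell-1)}_{i-1,s+1}$ needs $\ell-1\le n-s-1$, i.e.\ $s\le n-\ell$. When $\ell=n-j$ the second sum over $s$ is empty, which yields the second case of the recurrence.

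\textbf{Step 3: the graph $G$ itself.} Run the same argument for $G$ with $v\in V_{m,0}$. Neighbours $u\in V_{r,0}$ give $G-N[u]=G_{r-1,1}$, and neighbours $u\in V_{m,s}$ give $G_{m-1,s+1}$. This yields $f_0^{\V}$ and $f_t^{\V}$ in the same way. The top-dimensional cases $m<n$ and $m\ge n$ are distinguished by whether the second sum can reach $t=d$.

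\textbf{Main obstacle.} The delicate point is the bookkeeping at the boundary indices, which I would check case by case:
\begin{itemize}
\item[$\cdot$] $r=1$ or $s=n-1$, where the subgraph is a complete graph $G_{0,\cdot}$ or $G_{\cdot,n}$. Here every $0$-simplex is maximal and no $x_u$ is chosen among non-maximal ones, so I must confirm that the $-\delta$ correction still applies: one critical $0$-simplex is used as $x_u$.
\item[$\cdot$] Separately, check that $c^{(\ell)}$ is defined to be $0$ outside its dimension range, so that the summation limits are consistent.
\end{itemize}
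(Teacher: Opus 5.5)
Your proposal is correct and follows essentially the same route as the paper: apply Theorem~\ref{mth} once to $G_{i,j}$ with $v\in V_{i,j}$ (and to $G$ with $v\in V_{m,0}$), identify $G_{i,j}-N[u]$ as $G_{r-1,j+1}$ or $G_{i-1,s+1}$, and sum over the non-universal neighbours. The only cosmetic difference is that the paper gets the $-\delta_{\ell-1,0}$ term by subtracting $|N(v)|-k$ in part (ii) of Theorem~\ref{mth}, whereas you remove one $x_u$ per non-universal neighbour. Your boundary worry is settled by step (ii) of that construction, which pairs $\{u\}$ with some critical $\{u,x_u\}$ whenever $G-N[u]\neq\emptyset$, whether or not $G-N[u]$ is complete.
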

	
	\begin{proof}
		
		Let $m\geq 1$ and $n\geq 1$. Let us consider the graph $G_{i,j} \in \g_{i,n-j}$, for $i \in \{0,1, \ldots,m-1\}$ and $j \in \{1,2, \ldots, n\}$. 
		Let $v \in V_{i,j}$ and thus $N_{G_{i,j}}(v)=\bigsqcup \limits_{r=0}^{i-1}V_{r,j} \sqcup (V_{i,j} \setminus \{v\}) \bigsqcup \limits_{s=j+1}^{n}V_{i,s}$ (see Figure~\ref{fig g_ij graph}). Let $u \in N_{G_{i,j}}(v)$ be a non-universal vertex.
		
		If $i=0$, then $c_{0,j}^{(0)}=f_{0}^{\V'}(\ii(G_{0,j})).$
		Since $G_{0,j}$ is a complete graph, from Observation~\ref{lmcom}, the number of critical simplices of dimension $0$ in $\ii(G_{0,j})$ is the same as the number of all the vertices of $G_{0,j}$.
		Thus, \[c_{0,j}^{(0)}=\sum \limits_{s=j}^{n}|V_{0,s}|.\]
		
		If $j=n$, then $c_{i,n}^{(0)}=f_{0}^{\V'}(\ii(G_{i,n})).$
		Since $G_{i,n}$ is a complete graph, it follows from a similar argument given above that
		\[c_{i,n}^{(0)}=\sum \limits_{r=0}^{i}|V_{r,n}|.\]
		
		For $i \in \{1,2, \ldots,m-1\}$ and $j \in \{1,2, \ldots, n-1\}$, since $c_{i,j}^{(0)}=f_{0}^{\V'}(\ii(G_{i,j}))$, from Theorem~\ref{mth}, we get
		\begin{align*}
			c_{i,j}^{(0)}&=  1+  (\text{number of universal vertices of } G_{i,j})\\
			&=1+ |V_{0,j}|+|V_{i,n}|.
		\end{align*}
		For $u \in N_{G_{i,j}}(v)$, let $\vu$ be an acyclic matching on $\igij$, constructed recursively in the proof of Theorem~\ref{gridth}. Since $c_{i,j}^{(1)}=f_{1}^{\V'}(\ii(G_{i,j}))$, applying Theorem~\ref{mth}, we get
		\begin{align*}
			c_{i,j}^{(1)}&= \sum \limits_{u \in N_{G_{i,j}}(v)} f_{0}^{\vu}(\igij) - \big(|N_{G_{i,j}}(v)|-(\text{number of universal vertices of } G_{i,j})\big).
		\end{align*}
		Now from Theorem~\ref{gridth}, it follows that 
		\begin{align*}
			c_{i,j}^{(1)}&=\sum \limits_{r=1}^{i} \sum \limits_{u \in V_{r,j}\setminus\{v\}}f_{0}^{\vu}(\igij)+ \sum \limits_{s=j+1}^{n-1} \sum \limits_{u \in V_{i,s}}f_{0}^{\vu}(\igij)\\
			&\hspace{0.4cm}-\big(|N_{G_{i,j}}(v)|-\big(\text{number of universal vertices of } G_{i,j})\big).
		\end{align*}
		Since, for $u \in V_{r,j} \setminus \{v\}$, $G_{i,j}-N_{G_{i,j}}[u]=G_{r-1,j+1}$, and for $u \in V_{i,s}$, $G_{i,j}-N_{G_{i,j}}[u]=G_{i-1,s+1}$, we get
		\begin{align*}
			c_{i,j}^{(1)}&=\sum \limits_{r=1}^{i} \sum \limits_{u \in V_{r,j}\setminus\{v\}}c_{r-1,j+1}^{(0)}+ \sum \limits_{s=j+1}^{n-1} \sum \limits_{u \in V_{i,s}}c_{i-1,s+1}^{(0)}\\
			&\hspace{0.4cm}-\big(|N_{G_{i,j}}(v)|-\big(\text{number of universal vertices of } G_{i,j})\big)\\
			&=\sum \limits_{r=1}^{i}(|V_{r,j}|-\delta_{r,i}) \cdot c_{r-1,j+1}^{(0)}+
			\sum \limits_{s=j+1}^{n-1}|V_{i,s}| \cdot c_{i-1,s+1}^{(0)}\\
			&\hspace{0.4cm}-\big(|N_{G_{i,j}}(v)|-\big(\text{number of universal vertices of } G_{i,j})\big).
		\end{align*}
		\begin{align*}
			\text{Now, }&|N_{G_{i,j}}(v)|-\big(\text{number of universal vertices of } G_{i,j})\\
			=&\sum \limits_{r=0}^{i-1}|V_{r,j}|+|V_{i,j}|-1+ \sum \limits_{s=j+1}^{n}|V_{i,s}|-(|V_{0,j}|+|V_{i,n}|)
			=\sum \limits_{r=1}^{i}(|V_{r,j}|-\delta_{r,i})+ \sum \limits_{s=j+1}^{n-1}|V_{i,s}|.
		\end{align*}
		Therefore,
		\begin{align*}
			c_{i,j}^{(1)}&=\sum \limits_{r=1}^{i}(|V_{r,j}|-\delta_{r,i}) \cdot c_{r-1,j+1}^{(0)}+
			\sum \limits_{s=j+1}^{n-1}|V_{i,s}| \cdot c_{i-1,s+1}^{(0)}\\
			&\hspace{0.4cm}-\big(\sum \limits_{r=1}^{i}(|V_{r,j}|-\delta_{r,i})+ \sum \limits_{s=j+1}^{n-1}|V_{i,s}|\big)\\
			&=\sum \limits_{r=1}^{i}(|V_{r,j}|-\delta_{r,i}) \cdot \big(c_{r-1,j+1}^{(0)}-1\big)+
			\sum \limits_{s=j+1}^{n-1}|V_{i,s}| \cdot \big(c_{i-1,s+1}^{(0)}-1\big).
		\end{align*}
		Similarly, for $\ell \in \{2,3, \ldots, d_{i,j}\}$, we may verify that
		
		\begin{align*}
			c_{i,j}^{(\ell)} &=\begin{cases}
				\sum \limits_{r=\ell}^{i}(|V_{r,j}|-\delta_{r,i}) \cdot c_{r-1,j+1}^{(\ell-1)}+\sum \limits_{s=j+1}^{n-\ell}|V_{i,s}| \cdot c_{i-1,s+1}^{(\ell-1)} , & \text{if } \ell <n-j,\\
				\sum \limits_{r=\ell}^{i}(|V_{r,j}|-\delta_{r,i}) \cdot c_{r-1,j+1}^{(\ell-1)}, & \text{if } \ell=n-j.
			\end{cases}
		\end{align*}
		
		Finally, an analogous computation, with $i=m$ and $j=0$, yields the recursion formula for the critical $f$-vector of $\ig$ (with respect to $\V$).
	\end{proof}

	\bibliographystyle{plainurl}
	\bibliography{references}

\end{document}